\makeatletter\@namedef{subjclassname@2010}{\textup{2010} Mathematics Subject Classification}
\theoremstyle{plain}
\newtheorem{Thm}{Theorem}[section]
\newtheorem{Cor}[Thm]{Corollary}
\newtheorem{Pro}[Thm]{Proposition}
\theoremstyle{definition}
\newtheorem{Def}[Thm]{Definition}
\newtheorem{Exm}[Thm]{Example}
\newtheorem{Exs}[Thm]{Examples}
\theoremstyle{remark}
\newtheorem{Rem}[Thm]{Remark}
\numberwithin{equation}{section}
\newcommand{\ITE}[3]{\ifthenelse{#1}{#2}{#3}}\newcommand{\ITEE}[4][]{\ITE{\equal{#2}{#3}}{#4}{#1}}
\newenvironment{cor}[2][]{\ITEE[{\begin{Cor}[#1]}]{#1}{}{\begin{Cor}}\label{cor:#2}}{\end{Cor}}
\newenvironment{dfn}[2][]{\ITEE[{\begin{Def}[#1]}]{#1}{}{\begin{Def}}\label{def:#2}}{\end{Def}}
\newenvironment{exm}[2][]{\ITEE[{\begin{Exm}[#1]}]{#1}{}{\begin{Exm}}\label{exm:#2}}{\end{Exm}}
\newenvironment{exs}[2][]{\ITEE[{\begin{Exs}[#1]}]{#1}{}{\begin{Exs}}\label{exs:#2}}{\end{Exs}}
\newenvironment{pro}[2][]{\ITEE[{\begin{Pro}[#1]}]{#1}{}{\begin{Pro}}\label{pro:#2}}{\end{Pro}}
\newenvironment{rem}[2][]{\ITEE[{\begin{Rem}[#1]}]{#1}{}{\begin{Rem}}\label{rem:#2}}{\end{Rem}}
\newenvironment{thm}[2][]{\ITEE[{\begin{Thm}[#1]}]{#1}{}{\begin{Thm}}\label{thm:#2}}{\end{Thm}}
\newcommand{\COR}[2][!]{\ITEE{#1}{!}{Corollary~}\ITEE{#1}{s}{Corollaries~}\textup{\ref{cor:#2}}}
\newcommand{\DEF}[2][!]{\ITEE{#1}{!}{Definition~}\ITEE{#1}{s}{Definitions~}\textup{\ref{def:#2}}}
\newcommand{\EXS}[2][!]{\ITEE{#1}{!}{Examples~}\ITEE{#1}{1}{Example~}\textup{\ref{exs:#2}}}
\newcommand{\PRO}[2][!]{\ITEE{#1}{!}{Proposition~}\ITEE{#1}{s}{Propositions~}\textup{\ref{pro:#2}}}
\newcommand{\THM}[2][!]{\ITEE{#1}{!}{Theorem~}\ITEE{#1}{s}{Theorems~}\textup{\ref{thm:#2}}}
\newcommand{\RRR}{\mathbb{R}}
\newcommand{\DdD}{\EuScript{D}}
\newcommand{\FfF}{\EuScript{F}}
\newcommand{\NnN}{\EuScript{N}}
\newcommand{\RrR}{\EuScript{R}}
\newcommand{\ffF}{\mathscr{F}}
\newcommand{\ggG}{\mathscr{G}}
\newcommand{\llL}{\mathscr{L}}
\newcommand{\ssS}{\mathscr{S}}
\newcommand{\ttT}{\mathscr{T}}
\newcommand{\zzZ}{\mathscr{Z}}
\newcommand{\dd}{\colon}
\newcommand{\df}{\stackrel{\textup{def}}{=}}
\newcommand{\epsi}{\varepsilon}
\newcommand{\geqsl}{\geqslant}
\newcommand{\leqsl}{\leqslant}
\newcommand{\scalar}[2]{\left\langle#1,#2\right\rangle}
\newcommand{\scalarr}{\langle\cdot,\mathrm{-}\rangle}
\newcommand{\unit}[1]{#1^{\sqcup1}}
\newcommand{\varempty}{\varnothing}
\newcommand{\dist}{\operatorname{dist}}
\newcommand{\lin}{\operatorname{lin}}
\newcommand{\tfcae}{the following conditions are equivalent:}
\begin{document}

\title[Applications of amenable semigroups in operator theory]
 {Applications of amenable semigroups\\in operator theory}
\author[P.\ Niemiec]{Piotr Niemiec}
\address{P.\ Niemiec\\Instytut Matematyki\\
  Wydzia\l{} Matematyki i~Informatyki\\Uniwersytet Jagiello\'{n}ski\\
  ul.\ \L{}ojasiewicza 6\\30-348 Krak\'{o}w\\Poland}
\email{piotr.niemiec@uj.edu.pl}
\thanks{The first author is supported by the NCN (National Science Center in
 Poland), decision No. DEC-2013/11/B/ST1/03613.}
\author[P. W\'{o}jcik]{Pawe\l{} W\'{o}jcik}
\address{P. W\'{o}jcik\\Instytut Matematyki\\Uniwersytet Pedagogiczny\\%
 ul.\ Podchor\k{a}\.{z}ych 2\\{}30--084 Krak\'{o}w\\{}Poland}
\email{pwojcik@up.krakow.pl}

\begin{abstract}
The paper deals with continuous homomorphisms \(\ssS \ni s \mapsto T_s \in
\llL(E)\) of amenable semigroups \(\ssS\) into the algebra \(\llL(E)\) of all
bounded linear operators on a Banach space \(E\). For a closed linear subspace
\(F\) of \(E\), sufficient conditions are given under which there exists
a projection \(P \in \llL(E)\) onto \(F\) that commutes with all \(T_s\). And
when \(E\) is a Hilbert space, sufficient conditions are given for the existence
of an invertible operator \(L \in \llL(E)\) such that all \(L T_s L^{-1}\) are
isometries. Also certain results on extending intertwining operators, renorming
as well as on operators on hereditarily indecomposable Banach spaces are
offered.
\end{abstract}
\subjclass[2010]{Primary 47D03; Secondary 43A07, 47B40, 47A15, 47H10, 46B28.}
\keywords{bounded semigroup of operators; power bounded operator; decomposition
 of an operator; similarity to an isometry; amenable semigroup; abelian
 semigroup; fixed point property.}
\maketitle

\section{Introduction}

A (semi)group \(\ssS\) is \textit{amenable} if there exists a (so-called)
\textit{invariant mean} on a suitably chosen vector space of (certain) bounded
real-valued functions defined on \(\ssS\). This mean is a positive linear
functional invariant under (left, right or both left and right) translations of
the semigroup. The so-called \textit{Banach limits} on \(\ell_{\infty}\)
(see e.g.\ II.4.22 on page 73 in \cite{d-s}) are the most classical examples
of invariant means on a semigroup that is not a group. The above (sketch of a)
definition of an amenable semigroup deals with an intrinsic property of
the semigroup. However, the amenability is equivalent to a strong (and powerful)
property of a fixed point. Namely, a semigroup \(\ssS\) is amenable iff for
every affine action (with certain additional properties related to continuity
and corresponding to the version of amenability we deal with) \(K \ni x \mapsto
\phi_s(x) \in K\ (s \in \ssS)\) of \(\ssS\) on a compact convex non-empty subset
\(K\) of a locally convex topological vector space there is a point \(a \in K\)
such that \(\phi_s(a) = a\) for all \(s \in \ssS\). This property makes
amenability strongly applicable and thus amenable structures (which include
locally compact topological groups, abstract semigroups, extremely amenable
Polish groups, Banach algebras, \(C^*\)-algebras) are still widely investigated
and prospected. The simplest criterion for a topological group (or a semigroup)
to be amenable is the commutativity of its binary action. However, there are
a number of amenable groups (in particular, all compact topological) that are
non-abelian. The reader interested in the classical notion of amenability (for
topological groups or semigroups) is referred to, e.g., \cite{gre}, \cite{pat},
\cite{pie} or Chapter~1 in \cite{run}.\par
In this paper we propose two versions of amenability (see \DEF{amen} in
Section~3 below). Because of applications offered in this paper, we focus only
on right amenability. Our considerations comprise all semigroups equipped with
arbitrary topologies, that form a much wider class than topological semigroups.
This is motivated by a growing interest in semitopological semigroups (or Ellis
semigroups; these are related to ultrafilter techniques) in dynamical systems
and ergodic theory (see, e.g., Chapter~6 in \cite{aki}). We offer here certain
results on: the existence of projections commuting with a collection of bounded
operators on a Banach space (\PRO[s]{proj} and \ref{pro:fin}, \THM[s]{bdd} and
\ref{thm:fin} and \COR{proj}); extending intertwining operators (\THM{inter} and
\COR{bdd}); renorming the space in a way that all operators from a given family
become isometric (\THM{renorm} and \COR{enlarge}); as well as joint similarity
to isometries in a Hilbert space (\PRO{gen} and \COR{iso}). For clarity and
simplicity, below we formulate main of these results in the case of abelian
familes of operators:

\begin{thm}{m1}
Let \(\ttT \subset \llL(E)\) be an abelian multiplicative semigroup of operators
on a Banach space \(E\), and \(F\) be a closed linear subspace of \(E\) such
that:
\begin{enumerate}[\upshape(1)]
\item for any \(T \in \ttT\), \(T(F) = F\) and the restriction
 \(T\restriction{F}\) of \(T\) to \(F\) is an isomorphism;
\item \(F\) is a dual Banach space; that is, \(F\) is linearly isometric to
 the dual space of some Banach space \textup{(}and thus the closed unit ball
 \(B_F\) of \(F\) is compact with respect to certain locally convex topology,
 called further weak*, of \(F\)\textup{)};
\item for any \(T \in \ttT\), \(T\restriction{F}\) is weak* continuous on
 \(B_F\);
\item \(\sup_{T \in \ttT} (\|T\| \cdot \|(T\restriction{F})^{-1}\|) < \infty\).
\end{enumerate}
Then there exists a projection \(P \in \llL(E)\) onto \(F\) that commutes with
all \(T \in \ttT\) iff \(F\) is complemented in \(E\). More specifically, if
\(F\) is complemented in \(E\), then there exists a projection \(Q \in \llL(E)\)
onto \(F\) that commutes with all operators in \(\ttT\) and satisfies:
\begin{itemize}
\item \(\|Q\| \leqsl \sup_{T \in \ttT} (\|T\| \cdot \|(T\restriction{F})^{-1}\|)
 \cdot \lambda(E,F)\), where \(\lambda(E,F) = \inf \{\|P\|\dd\ P \in \llL(E)
 \textup{ a projection onto } F\}\),
\item \(Q\) is minimal; that is, \(\|Q\| \leqsl \|P\|\) for any other projection
 \(P\) onto \(F\) that commutes with all operators in \(\ttT\).
\end{itemize}
\end{thm}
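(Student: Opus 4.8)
The plan is to realize the desired projection as a common fixed point of a canonical action of $\ttT$ on a compact convex set of projections, and to invoke the Markov--Kakutani fixed point theorem --- i.e.\ the fixed point property of the amenable (here, abelian) semigroup $\ttT$.

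The ``only if'' implication is trivial, since a projection commuting with all $T\in\ttT$ in particular complements $F$. Assume then that $F$ is complemented, set $M:=\sup_{T\in\ttT}(\|T\|\cdot\|(T\restriction F)^{-1}\|)<\infty$ (so $M\geqsl1$ unless $F=\{0\}$, a trivial case), and for a bounded projection $P$ of $E$ onto $F$ and $T\in\ttT$ put $P^T:=(T\restriction F)^{-1}\circ P\circ T$. Using $T(F)=F$ one checks directly that $P^T$ is again a bounded projection of $E$ onto $F$, that $\|P^T\|\leqsl M\|P\|$ by~(4), that $(P^T)^S=P^{TS}$ for $S,T\in\ttT$, and --- crucially --- that $P$ commutes with $T$ iff $P^T=P$. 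Hence it suffices to find a common fixed point of the affine maps $A_T\dd P\mapsto P^T$.

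By~(2) write $F=G^*$ and identify $\llL(E,F)$ isometrically with $(E\widehat{\otimes}G)^*$ through $\scalar{P}{x\otimes g}=\scalar{Px}{g}$; under this identification $\llL(E)$-projections onto $F$ correspond to those $P\in\llL(E,F)$ with $P\restriction F=\mathrm{id}_F$. In the weak* topology $\tau$ so obtained, norm-bounded $\tau$-closed subsets of $\llL(E,F)$ are $\tau$-compact, the norm is $\tau$-lower semicontinuous, and the set of projections of $E$ onto $F$ is a $\tau$-closed affine set (cut out by the affine conditions $\scalar{Px}{g}=\scalar{x}{g}$, $x\in F$, $g\in G$). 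By~(3) and the Krein--\v{S}mulian theorem, each $T\restriction F$ is weak*-to-weak* continuous, hence $T\restriction F=t^*$ for some $t\in\llL(G)$, and $t$ is invertible by~(1), so $(T\restriction F)^{-1}=(t^{-1})^*$; a short computation then gives $A_T=(T\otimes t^{-1})^*$, so $A_T$ is $\tau$-continuous. Now fix a projection $P_0$ of $E$ onto $F$ and let $K$ be the $\tau$-closure of the convex hull of $\{P_0\}\cup\{A_TP_0\dd T\in\ttT\}$; by~(4) this set is norm-bounded, hence $\tau$-compact, it is convex, it consists of projections of $E$ onto $F$, and --- each $A_T$ being $\tau$-continuous and affine with $A_T(A_SP_0)=A_{ST}P_0\in\{A_{T'}P_0\dd T'\in\ttT\}$ --- it is invariant under every $A_T$. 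As $\ttT$ is abelian, the maps $A_T\restriction K$ commute pairwise (use $(P^T)^S=P^{TS}$), so Markov--Kakutani yields $Q\in K$ with $A_TQ=Q$ for all $T\in\ttT$; by the equivalence in the previous paragraph, $Q$ is a projection of $E$ onto $F$ commuting with every $T\in\ttT$. This settles existence.

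For the sharp estimate and minimality, write $\lambda=\lambda(E,F)$ and let $K_0$ be the convex $\tau$-compact set of all projections of $E$ onto $F$ that commute with every $T\in\ttT$ and have norm $\leqsl M(\lambda+1)$; it is non-empty by the previous step. Applying that step to a $P_0$ with $\|P_0\|\leqsl\lambda+\epsi$, $0<\epsi\leqsl1$, produces an element of $K_0$ of norm $\leqsl M(\lambda+\epsi)$, so $\inf_{Q\in K_0}\|Q\|\leqsl M\lambda$; $\tau$-lower semicontinuity of the norm and $\tau$-compactness of $K_0$ force this infimum to be attained at some $Q\in K_0$, which then satisfies $\|Q\|\leqsl M\cdot\lambda(E,F)$. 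Lastly, if $P$ is any projection of $E$ onto $F$ commuting with all $T\in\ttT$, then either $\|P\|\geqsl M(\lambda+1)>\|Q\|$ or $P\in K_0$ and hence $\|Q\|=\inf_{K_0}\|\cdot\|\leqsl\|P\|$; so $\|Q\|\leqsl\|P\|$ always, i.e.\ $Q$ is minimal. The crux of the argument is the middle paragraph: spotting that $\llL(E,F)\cong(E\widehat{\otimes}G)^*$ is where~(2) yields $\tau$-compactness, using~(3) and Krein--\v{S}mulian to write $T\restriction F$ and its inverse as adjoints so that the $A_T$ are $\tau$-continuous, and --- since norm balls of projections need not be $A_T$-invariant --- replacing them by the $\tau$-closed convex hull of a single orbit, kept bounded by~(4).
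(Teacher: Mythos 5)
Your proof is correct and follows essentially the same route as the paper's: the same affine action $P\mapsto(T\restriction F)^{-1}PT$ on the projections onto $F$, compactness of a bounded convex set of such projections in the weak* operator topology via (2)--(3) and the Kre\u{\i}n--Smulian theorem, a fixed point from Markov--Kakutani (which is exactly the fixed-point property of abelian semigroups that the paper invokes), and lower semicontinuity of the norm on a weak*-compact set for minimality. The only departures are cosmetic: you take the closed convex hull of a single orbit instead of the paper's norm ball of projections, and you replace its appeal to Cheney--Morris (existence of a projection of norm exactly $\lambda(E,F)$) by an $\varepsilon$-argument together with attainment of the infimum, which in effect re-proves that lemma.
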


A special case (for compact groups) of the generalisation of the above result
(formulated in \PRO{proj}) is due to Rudin \cite{rud}.

\begin{thm}{m2}
Let \(\ssS \ni s \mapsto A_s \in \llL(X)\) and \(\ssS \ni s \mapsto B_s \in
\llL(Y)\) be two homomorhisms, with \(B_s\) invertible for all \(s \in \ssS\),
of an abelian semigroup \(\ssS\), where \(X\) is a Banach space and \(Y\) is
a dual Banach space. Further, let \(T_0\dd E \to Y\) be a bounded linear
operator from a closed linear subspace \(E\) of \(X\) into \(Y\) such that for
any \(s \in \ssS\), \(A_s(E) \subset E\) and \(B_s T_0 = T_0
A_s\restriction{E}\). Then \tfcae
\begin{enumerate}[\upshape(i)]
\item \(T_0\) extends to a bounded linear operator \(T\dd X \to Y\) such that
 \(B_s T = T A_s\) for all \(s \in \ssS\);
\item there exists a bounded linear operator \(T'\dd X \to Y\) that extends
 \(T_0\) and satisfies \(\sup_{s\in\ssS} \|B_s^{-1} T' A_s\| < \infty\).
\end{enumerate}
\end{thm}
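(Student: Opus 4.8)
\emph{Proof sketch.} The implication (i)$\Rightarrow$(ii) is immediate: if $T$ extends $T_0$ and $B_sT=TA_s$ for every $s\in\ssS$, then $B_s^{-1}TA_s=B_s^{-1}B_sT=T$, so one may take $T':=T$, and then $\sup_{s\in\ssS}\|B_s^{-1}T'A_s\|=\|T\|<\infty$. The content of the theorem is the reverse implication, and the plan is to exploit the amenability of the abelian semigroup $\ssS$ through its fixed point property, by averaging the family $\{B_s^{-1}T'A_s\dd s\in\ssS\}$.

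Fix $T'$ as in (ii), set $M:=\sup_{s\in\ssS}\|B_s^{-1}T'A_s\|$ and, for $s\in\ssS$, put $C_s:=B_s^{-1}T'A_s\in\llL(X,Y)$. First I would record two elementary facts. \emph{(a)} Each $C_s$ extends $T_0$ and $\|C_s\|\leqsl M$: indeed, for $x\in E$ we have $A_sx\in E$, hence $T'A_sx=T_0A_sx=B_sT_0x$ and so $C_sx=B_s^{-1}B_sT_0x=T_0x$. \emph{(b)} For each $t\in\ssS$ the affine map $\phi_t\dd\llL(X,Y)\to\llL(X,Y)$, $\phi_t(S):=B_t^{-1}SA_t$, sends $C_s$ to $C_{st}$ — this is just the fact that $s\mapsto A_s$ and $s\mapsto B_s^{-1}$ are homomorphisms of the abelian $\ssS$ — and $\phi_t\circ\phi_u=\phi_{tu}$ ($\ssS$ being abelian), so $t\mapsto\phi_t$ is an affine action of $\ssS$ on $\llL(X,Y)$.

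Next I would set up a compact convex arena. Writing $Y=Z^*$ with $Z$ a predual, the canonical isometry $\llL(X,Z^*)\cong(X\mathbin{\widehat{\otimes}_\pi}Z)^*$, with pairing $\langle S,x\otimes z\rangle=(Sx)(z)$, equips $\llL(X,Y)$ with a weak$^*$ topology in which norm-bounded sets are relatively compact; the set of $S\in\llL(X,Y)$ with $S\restriction E=T_0$ and $\|S\|\leqsl M$ is weak$^*$-closed, hence $K:=\overline{\operatorname{conv}}^{\,w^*}\{C_s\dd s\in\ssS\}$ is a non-empty compact convex subset of it, and by \emph{(a)}--\emph{(b)} it is $\phi_t$-invariant for every $t$. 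Applying the fixed point characterization of amenability (recall that abelian semigroups are amenable) to the action $t\mapsto\phi_t\restriction K$ yields $T\in K$ with $\phi_t(T)=T$, i.e.\ $B_tT=TA_t$, for all $t\in\ssS$; and $T\in K$ forces $T\restriction E=T_0$. Such a $T$ is exactly what (i) asks for.

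The delicate point — which I expect to be the main obstacle — is the continuity requirement in the fixed point theorem: the maps $\phi_t\restriction K$ must be weak$^*$-continuous, and $S\mapsto B_t^{-1}SA_t$ is weak$^*$-continuous precisely when $B_t$ (equivalently $B_t^{-1}$) is weak$^*$-continuous on $Y=Z^*$, i.e.\ is an adjoint operator. I would handle this either by adding that mild assumption — natural since $Y$ is presented as a dual space — or, to keep full generality, by running the argument with the $C_s$ viewed as operators into $Y^{**}$ and $\phi_t$ replaced by $S\mapsto(B_t^{-1})^{**}SA_t$, which is automatically weak$^*$-continuous as an adjoint, and then facing the separate task of checking that the fixed point $T$, sitting in the weak$^*$-closed convex hull of an orbit contained in $\llL(X,Y)$, in fact has range inside $Y$ (by a minimality or barycentre argument). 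The remaining verifications — the bound $\|C_s\|\leqsl M$, weak$^*$-closedness of the constraint $S\restriction E=T_0$, and the homomorphism identities for $A$ and $B$ — are routine.
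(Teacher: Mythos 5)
Your proposal is correct and follows essentially the same route as the paper: the paper deduces \THM{m2} from its general result \THM{inter}, whose proof runs the affine right action \(L.s = B_s^{-1}LA_s\) on a weak*-compact convex set of norm-bounded extensions of \(T_0\) (in the weak* operator topology) and invokes the fixed-point characterisation of right amenability, exactly as you do. The ``delicate point'' you flag is genuine: the general theorem explicitly assumes that each \(B_s\) is weak*-continuous on the closed unit ball of \(Y\) (condition (i3) of \THM{inter}), an assumption silently omitted from the simplified statement of \THM{m2}, so the first of your two proposed remedies is precisely the hypothesis under which the paper's argument goes through.
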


\begin{thm}{m3}
Let \(\ttT \subset \llL(E)\) be a bounded abelian multiplicative semigroup of
operators on a Banach space \(E\), and \(\NnN(\ttT) \df \{x \in E\dd\ Tx = x\ (T
\in \ttT)\}\) and \(\RrR(\ttT) \df \overline{\lin} \{Tx - x\dd\ T \in \ttT,\ x
\in E\}\). Then:
\begin{itemize}
\item \(\NnN(\ttT) \cap \RrR(\ttT) = \{0\}\),
\item the subspace
 \begin{equation}\label{eqn:dec}
 \DdD(\ttT) \df \NnN(\ttT) + \RrR(\ttT)
 \end{equation}
 is closed in \(E\), and
\item the projection \(P\dd \DdD(\ttT) \to \NnN(\ttT)\) induced by
 the decomposition \eqref{eqn:dec} has norm not greater than
 \(\sup_{T\in\ttT} \|T\|\).
\end{itemize}
\end{thm}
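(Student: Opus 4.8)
The plan is to run the argument via an invariant mean on $\ttT$, using that mean \emph{only} to separate $\NnN(\ttT)$ from $\RrR(\ttT)$ and to bound the norm of the algebraic projection on $\DdD(\ttT)$ — not to try to extend that projection to all of $E$, which would force a reflexivity-type condition. I would begin by setting $M \df \sup_{T\in\ttT}\|T\| < \infty$ and fixing an invariant mean $\mu$ on the space of all bounded scalar-valued functions on $\ttT$ (available because $\ttT$ is abelian; complex scalars are handled by applying $\mu$ to real and imaginary parts). For $\phi \in E^*$ the function $T \mapsto \phi(Tx)$ has supremum norm at most $M\|\phi\|\,\|x\|$, so $\tilde\phi(x) \df \mu_T\bigl(\phi(Tx)\bigr)$ defines a linear functional $\tilde\phi$ on $E$ with $\|\tilde\phi\| \leqsl M\|\phi\|$. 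The two properties of $\tilde\phi$ I would record first are: (a) $\tilde\phi(x) = \phi(x)$ whenever $x \in \NnN(\ttT)$, since then $T \mapsto \phi(Tx)$ is the constant function $\phi(x)$; and (b) $\tilde\phi$ vanishes on $\RrR(\ttT)$, since for $y = Sx - x$ one has $\phi(Ty) = \phi\bigl((TS)x\bigr) - \phi(Tx)$, the function $T \mapsto \phi((TS)x)$ is a translate of the bounded function $T \mapsto \phi(Tx)$, so the two have the same $\mu$-mean and $\tilde\phi(y) = 0$, and then linearity and continuity of $\tilde\phi$ extend this to the closed linear span $\RrR(\ttT)$.

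Granting (a) and (b), the first bullet is immediate: if $z \in \NnN(\ttT) \cap \RrR(\ttT)$ then $\phi(z) = \tilde\phi(z) = 0$ for every $\phi \in E^*$, so $z = 0$. Consequently the sum in \eqref{eqn:dec} is direct and the induced projection $P \dd \DdD(\ttT) \to \NnN(\ttT)$ is a well-defined linear map. For the third bullet I would take $x \in \DdD(\ttT)$, write $x = Px + r$ with $r \in \RrR(\ttT)$, and apply (a) and (b) once more: for each $\phi \in E^*$, $\tilde\phi(x) = \tilde\phi(Px) + \tilde\phi(r) = \phi(Px)$, so $|\phi(Px)| = |\tilde\phi(x)| \leqsl M\|\phi\|\,\|x\|$, and taking the supremum over the unit ball of $E^*$ gives $\|Px\| \leqsl M\|x\| = \bigl(\sup_{T\in\ttT}\|T\|\bigr)\|x\|$. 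Finally the second bullet follows for free from this estimate: $\NnN(\ttT) = \bigcap_{T\in\ttT}\ker(T-I)$ and $\RrR(\ttT)$ are closed, and if $\DdD(\ttT) \ni x_k \to x$ in $E$, then $\|Px_k - Px_l\| \leqsl M\|x_k - x_l\| \to 0$, so $Px_k$ converges to some $n \in \NnN(\ttT)$, whence $x_k - Px_k \to x - n \in \RrR(\ttT)$ and $x = n + (x-n) \in \DdD(\ttT)$.

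I expect the one genuinely delicate point to be the asymmetry already flagged: for a fixed $x$ the map $\phi \mapsto \tilde\phi(x)$ on $E^*$ is in general \emph{not} weak* continuous (an invariant mean is not), so it need not be evaluation at a vector of $E$; what saves the argument is that, once $x$ is restricted to $\DdD(\ttT)$, this map \emph{is} evaluation at $Px \in E$ by (a)--(b), and it is this identification — rather than any weak compactness of the orbits $\{Tx : T \in \ttT\}$ — that makes the decomposition and the norm bound go through with no further hypothesis on $E$. (When $\ttT$ is the semigroup $\{T^n : n \geqsl 0\}$ generated by a single power-bounded operator $T$, this is the mean-ergodic splitting $\ker(T-I) \oplus \overline{(T-I)E}$, usually established under the assumption that $E$ is reflexive; reflexivity is not needed here precisely because only $\DdD(\ttT)$, and not all of $E$, is being split.)
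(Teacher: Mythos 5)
Your argument is correct, and it reaches the conclusion by a genuinely different route than the paper. The paper deduces \THM{m3} from the implication (a)\(\Rightarrow\)(b) of \COR{proj} (discrete abelian semigroups being strongly right amenable): for each non-zero \(y\in\NnN(\ttT)\) it invokes \THM{fin} to produce a projection \(P_0\) of \(E\) onto the line \(\lin\{y\}\) commuting with every \(T\in\ttT\) and of norm at most \(M\df\sup_{T\in\ttT}\|T\|\); since \(P_0(Tx-x)=T(P_0x)-P_0x=0\), the kernel of \(P_0\) contains \(\RrR(\ttT)\), and the inequality \(\|y\|=\|P_0(y+z)\|\leqsl M\|y+z\|\) yields both the trivial intersection and the norm bound, the closedness of the sum then following exactly as in your last step. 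That derivation runs through the fixed-point theorem (\THM{fpp}) applied to a compact convex set of projections in the weak* operator topology. You bypass all of that by applying the invariant mean directly to the orbit functions \(T\mapsto\phi(Tx)\), obtaining for each \(\phi\in E^*\) a functional \(\tilde\phi\) with \(\|\tilde\phi\|\leqsl M\|\phi\|\) that restricts to \(\phi\) on \(\NnN(\ttT)\) and annihilates \(\RrR(\ttT)\)---morally the dual-side shadow of the paper's rank-one commuting projections, but obtained without \PRO{proj}, \THM{fin} or any compactness of operator sets, using only the existence of an invariant mean on \(\ell^{\infty}(\ttT)\). The one point you should make explicit is that in the complex case the complexified mean still satisfies \(|\mu(g)|\leqsl\sup_{T\in\ttT}|g(T)|\) (rotate: if \(\mu(g)=re^{i\theta}\) with \(r\geqsl 0\), then \(r=\mu(\operatorname{Re}(e^{-i\theta}g))\leqsl\|g\|_{\infty}\)), since your sharp estimate \(\|P\|\leqsl M\) depends on it. Your closing observation is also on target: the failure of weak* continuity of \(\phi\mapsto\tilde\phi(x)\) is precisely why this method splits only \(\DdD(\ttT)\) and not \(E\); what the paper's heavier apparatus buys in exchange is the converse direction of \COR{proj} (the decomposition property characterises right amenability) and, under the duality hypotheses of \PRO{proj}, genuine commuting projections defined on all of \(E\).
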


\begin{thm}{m4}
Let \(\ttT \subset \llL(H)\) be a bounded abelian multiplicative semigroup of
operators on a Hilbert space \(H\). Suppose there are two positive real
constants \(m\) and \(M\) such that for any \(T \in \ttT\) and \(x \in H\),
\[m \|x\| \leqsl \|T x\| \leqsl M \|x\|.\]
Then there exists an invertible positive operator \(A \in \llL(H)\) such that
\(m I \leqsl A \leqsl M I\) and \(A T A^{-1}\) is an isometry for each \(T \in
\ttT\).
\end{thm}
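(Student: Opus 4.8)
The plan is to recast the conclusion as an invariance property of a single positive operator, to produce that operator by averaging ``twisted'' inner products against an invariant mean, and then to recover $A$ as its square root. For the reduction, observe that an invertible positive $A \in \llL(H)$ makes every $A T A^{-1}$ an isometry precisely when $(ATA^{-1})^*(ATA^{-1}) = I$ for all $T \in \ttT$, and --- since $A = A^*$ --- this is the same as $T^* A^2 T = A^2$. Writing $B \df A^2$, it therefore suffices to construct a positive invertible $B \in \llL(H)$ with
\[ m^2 I \leqsl B \leqsl M^2 I \qquad\text{and}\qquad T^* B T = B \quad (T \in \ttT). \]
Indeed, $A \df B^{1/2}$ is then positive and invertible, the continuous functional calculus yields $\sigma(A) = \{\sqrt{\lambda}\dd\ \lambda \in \sigma(B)\} \subseteq [m,M]$, i.e.\ $m I \leqsl A \leqsl M I$, and $\|ATA^{-1}x\|^2 = \scalar{A^{-1}(T^*A^2T)A^{-1}x}{x} = \scalar{A^{-1}A^2A^{-1}x}{x} = \|x\|^2$ for every $x \in H$.

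To build $B$ I would use amenability. As $\ttT$ is abelian it is amenable, so fix a right-invariant mean $\mu$ on the space of all bounded scalar functions on $\ttT$. For $x,y \in H$ set $f_{x,y}(T) \df \scalar{Tx}{Ty}$; then $f_{x,y}$ is bounded, because $|f_{x,y}(T)| \leqsl \|Tx\|\,\|Ty\| \leqsl M^2\|x\|\,\|y\|$, and one may put $[x,y] \df \mu(f_{x,y})$. Linearity of $\mu$ and of $\scalarr$ in each slot (together with compatibility of $\mu$ with complex conjugation, in the complex case) make $[\cdot,\mathrm{-}]$ a Hermitian sesquilinear form; and since $\mu$ is positive and fixes the constants while $m^2\|x\|^2 \leqsl f_{x,x}(T) \leqsl M^2\|x\|^2$ for all $T$, one gets $m^2\|x\|^2 \leqsl [x,x] \leqsl M^2\|x\|^2$. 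Hence $[\cdot,\mathrm{-}]$ is an inner product equivalent to the original one, and the Riesz representation theorem supplies a unique self-adjoint $B \in \llL(H)$ with $\scalar{Bx}{y} = [x,y]$; the last estimates read off as $m^2 I \leqsl B \leqsl M^2 I$, so $B$ is positive and invertible.

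It remains to check $T^* B T = B$, and here amenability is used once more. Fix $S \in \ttT$. Since $\ttT$ is multiplicatively closed, $f_{Sx,Sy}(T) = \scalar{(TS)x}{(TS)y} = f_{x,y}(TS)$ shows that $f_{Sx,Sy}$ is the right translate of $f_{x,y}$ by $S$; right-invariance of $\mu$ then gives $[Sx,Sy] = \mu(f_{Sx,Sy}) = \mu(f_{x,y}) = [x,y]$ for all $x,y \in H$, that is, $\scalar{BSx}{Sy} = \scalar{Bx}{y}$, i.e.\ $S^*BS = B$. Combined with the first paragraph, this proves the theorem.

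I do not anticipate a deep obstacle: once the invariant mean is at hand the argument is short. The two points that need care are, first, the legitimacy of applying $\mu$ to the functions $f_{x,y}$ --- which is automatic here because a (discrete) abelian semigroup carries an invariant mean on all bounded functions, but in the general non-abelian statement, of which this theorem is a special case, requires the appropriate right-amenability hypothesis plus a continuity check on the $f_{x,y}$ --- and, second, that the passage from $B$ to $B^{1/2}$ preserves the two-sided bound, which is exactly the spectral mapping theorem for $t \mapsto \sqrt{t}$ on $\sigma(B) \subseteq [m^2, M^2]$.
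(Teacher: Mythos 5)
Your proof is correct, and it reaches the paper's conclusion by a genuinely different (and for this special case, more elementary) route. The paper deduces \THM{m4} from \COR{iso} and \PRO{gen}: there the invariant object \(B\) with \(T^*BT=B\) is obtained as a fixed point of the affine right action \((B,T)\mapsto T^*BT\) on the weak-operator-compact convex set \(\Delta\) of positive operators satisfying the two-sided quadratic bounds, via the fixed-point characterisation of amenability (\THM{fpp}); this forces the authors to verify compactness of \(\Delta\) and joint continuity of the action, but in exchange their \PRO{gen} covers non-abelian weakly right amenable SGT's, merely strong-operator-continuous homomorphisms, pointwise bounds \(m(x),M(x)\), and a possibly non-invertible \(A_0\) (whence the detour through the polar decomposition \(AT_s=V_s|AT_s|\)). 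You instead work on the other side of the equivalence in \THM{fpp}: you apply a right-invariant mean directly to the bounded functions \(T\mapsto\scalar{Tx}{Ty}\) and read off \(B\) from the Riesz representation of the averaged form---the classical Sz.-Nagy/Dixmier averaging argument. This shortcut is legitimate precisely because \(\ttT\) is discrete and abelian, so (as the paper notes in \EXS{amen}(B),(C)) an invariant mean exists on \emph{all} bounded functions and no continuity of \(f_{x,y}\) needs checking; you correctly flag that this is where the general statement would require the right-amenability and continuity hypotheses. Your reduction to \(T^*BT=B\), the two-sided bound \(m^2I\leqsl B\leqsl M^2I\), and the passage to \(A=B^{1/2}\) with \(mI\leqsl A\leqsl MI\) all match the paper's endgame, with your direct computation \(\|ATA^{-1}x\|=\|x\|\) replacing the polar-decomposition step, which is unnecessary once \(A\) is invertible.
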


The above theorem generalises a classical result of Sz.-Nagy \cite{SzN} on
bounded abelian groups of Hilbert space operators.\par
The paper is organised as follows. In the next section we introduce basic
notions and fix the notation. Section~3 is a preliminary part on two variants of
amenability we will deal with further. The fourth, main part is devoted to
applications of amenablility defined in the previous one. We prove and formulate
there all results dealing with amenable semigroups and operators in Banach
spaces. This section is concluded by a proof of \THM[s]{m1}--\ref{thm:m3}.
In the last, fifth section we give more information on the two kinds of
amenability introduced in Section~3.

\section{Notation and terminology}

In this paper all semigroups are non-empty and Banach spaces are real or
complex. We use multiplicative notation to denote binary actions of abstract
semigroups (excluding vector spaces). The abbreviation \textit{SGT} means
a \textit{semigroup with topology}. So, a statement \textit{\(\ssS\) is an SGT}
means that \(\ssS\) is both a semigroup and a (possibly non-Hausdorff)
topological space, and \textbf{no compatibility axiom (between these two
structures) is assumed}. In particular, in an SGT the binary action can be
separately discontinuous. Although SGT's need not be Hausdorff, for us
topological groups, by definition, are. An important example of non-topological
SGT's are Ellis semigroups in which the binary action is continuous only in
the left variable (according to the definition given in Chapter~6 of
\cite{aki}). A classical example of compact Ellis semigroups that widely appear
in functional analysis are the closed unit balls (considered as multiplicative
semigroups with the weak* operator topologies) of the Banach spaces of the form
\(\llL(X^*)\) where \(X^*\) is the dual Banach space of a Banach space \(X\)
(only when \(X\) is reflexive, the multiplication is separately continuous in
these SGT's).\par
An SGT does not need to have a neutral element, and if it does have, it is said
to be \textit{unital} (otherwise it is called \textit{non-unital}). Similarly,
a homomorphism between two unital SGT's is called \textit{unital} if it sends
the unit of the source semigroup to the unit of the target.\par
By a \textit{right action} of a semigroup \(\ssS\) on a set \(X\) we mean any
function \(X \times \ssS \ni (x,s) \mapsto \phi_s(x) = x . s \in X\) such that
for all \(s, t \in \ssS\) and \(x \in X\), \((x . s) . t = x . (st)\). If, in
addition,
\begin{itemize}
\item \(\ssS\) is unital and \(x . 1 = x\) for any \(x \in X\), the action is
 called \textit{unital};
\item \(X\) is a convex set (in a real vector space) and \(\phi_s\) is
 affine---that is, \[((1-\alpha) x + \alpha y).s = (1-\alpha) (x.s) + \alpha
 (y.s)\] for any \(x, y \in X\) and \(\alpha \in [0,1]\), the action is called
 \textit{affine}.
\end{itemize}
A point \(a \in X\) is said to be a \textit{fixed point} for the action if \(a.s
= a\) for any \(s \in \ssS\).\par
When \((x,s) \mapsto x . s\) is a right action of \(\ssS\) on a set \(X\) and
\(u\dd X \to Y\) is an arbitrary function, for any \(s \in \ssS\), \(u_s\dd X
\to Y\) is defined by \(u_s(t) = u(ts)\). This, in particular, applies to
the natural right action of \(\ssS\) on itself.\par
For a topological space \(X\), \(C_b(X)\) stands for the algebra of all bounded
continuous real-valued functions on \(X\), equipped with the sup-norm. If \(X\)
is compact, we write \(C(X)\) in place of \(C_b(X)\).\par
For any Banach space \(E\), \(\llL(E)\) denotes the algebra of all bounded
linear operators from \(E\) into \(E\). When \(F\) is a closed linear subspace
of \(E\), a statement ``\textit{\(P\dd E \to F\) is a projection}'' means that
\(P\) is a bounded linear operator such that \(Pf = f\) for any \(f \in F\).
If \(F\) is complemented in \(E\), \(\lambda(E,F)\) is used to denote
the quantity specified in \THM{m1}. A function \(\ssS \ni s \mapsto T_s \in
\llL(E)\) is a homomorphism if \(T_{st} = T_s T_t\) for all \(s, t \in \ssS\).

\section{Two kinds of amenability}

In the literature there are two classes of SGT's in which amenability is well
studied---these are the classes of locally compact topological groups (consult,
e.g., \cite{gre}, \cite{pat}, \cite{pie} or Chapter~1 in \cite{run}) and of
abstract semigroups (that is, semigroups without topologies or, equivalently,
discrete semigroups; see, e.g., \cite{pat} or Section~1 in \cite{gre}).
Amenability was also generalised to Banach algebras (see, e.g., Chapter~2 in
\cite{run} or \cite{jhp} and references therein). Amenability in
\(C^*\)-algebras also has a special interest (consult, e.g., \cite{lin}).\par
A topic closely related to amenable groups are \textit{extremely amenable Polish
groups}, which are now intensively studied. According to our \DEF{amen} (see
below), all such groups are weakly right amenable. The literature on extremely
amenable Polish groups is still growing up and we mention only a few of them:
\cite{g-m}, \cite{kpt}, \cite{r-s}, \cite{f-s}, \cite{sab} and references
therein or \cite{m-t}.\par
Because of further applications, we will deal with two versions of right
amenability. Of course, in a similar manner one can introduce their counterparts
for left amenability.\par
From now on, let \(\ssS\) be an SGT and \(C_r(\ssS)\) consist of all functions
\(f \in C_b(\ssS)\) such that \(f_s\) is continuous for any \(s \in \ssS\).\par
Let \(\FfF\) be a linear subspace of \(C_r(\ssS)\). A \textit{mean} on \(\FfF\)
is a linear functional \(\phi\dd \FfF \to \RRR\) such that \(\inf f(\ssS) \leqsl
\phi(f) \leqsl \sup f(\ssS)\) for any \(f \in \FfF\). (If \(\FfF\) contains
a function \(j\) constantly equal to \(1\), a linear functional \(\phi\) on
\(\FfF\) is a mean iff \(\|\phi\| = \phi(j) = 1\).) The space \(\FfF\) is said
to be \textit{right invariant} if \(u_s \in \FfF\) for any \(u \in \FfF\) and
\(s \in \ssS\). (It is easily seen that \(C_r(\ssS)\) is right invariant.)\par
A mean \(\phi\) on a right invariant linear subspace \(\FfF\) of \(C_r(\ssS)\)
is \textit{right invariant} if \(\phi(f_s) = \phi(f)\) for any \(f \in \FfF\)
and \(s \in \ssS\).\par
Right amenability deals with right invariant means and asks about their
existence on suitably chosen right invariant linear spaces \(\FfF\). The bigger
this space \(\FfF\) is, the better version of amenability we get. Below we
propose two of them.

\begin{dfn}{amen}
An SGT \(\ssS\) is said to be \textit{weakly right amenable} if there is a right
invariant mean on
\begin{multline*}
C_{norm}(\ssS) \df \{f \in C_r(\ssS)\dd\ s \mapsto f_s \textup{\quad and\quad}
s \mapsto f_{st}\textup{\quad are continuous}\\\textup{in the norm topology of
\(C_r(\ssS)\) for all } t \in \ssS\}.
\end{multline*}
\(\ssS\) is said to be \textit{strongly right amenable} if there is a right
invariant mean on
\begin{multline*}
C_{weak}(\ssS) \df \{f \in C_r(\ssS)\dd\ s \mapsto f_s \textup{\quad and\quad}
s \mapsto f_{st}\textup{\quad are continuous}\\\textup{in the weak topology of
\(C_r(\ssS)\) for all } t \in \ssS\}.
\end{multline*}
Since \(C_{norm}(\ssS) \subset C_{weak}(\ssS)\), a strongly right amenable SGT
is weakly right amenable.
\end{dfn}

The following result, proved by Day \cite{day,dy2} (see Theorem~3 therein) for
amenable abstract semigroups, is a powerful tool that makes amenable SGT's
useful.

\begin{thm}{fpp}
For an SGT \(\ssS\) \tfcae
\begin{enumerate}[\upshape(SG1)]
\item \(\ssS\) is weakly \textup{[}resp.\ strongly\textup{]} right amenable;
\item every jointly \textup{[}resp.\ separately\textup{]} continuous affine
 right action of \(\ssS\) on a non-empty compact convex set in a Hausdorff
 locally convex topological vector space has a fixed point.
\end{enumerate}
If, in addition, \(\ssS\) is unital, then the above conditions are equivalent
to:
\begin{itemize}
\item[\textup{(SG2')}] every jointly \textup{[}resp.\ separately\textup{]}
 continuous unital affine right action of \(\ssS\) on a non-empty compact convex
 set in a Hausdorff locally convex topological vector space has a fixed point.
\end{itemize}
\end{thm}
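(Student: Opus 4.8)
\emph{Overview of the plan.} I will prove the cycle (SG1)$\Rightarrow$(SG2)$\Rightarrow$(SG1) in both readings at once, writing $\FfF$ for $C_{norm}(\ssS)$ in the weak/jointly continuous case and for $C_{weak}(\ssS)$ in the strong/separately continuous case. The two directions differ in flavour: (SG2)$\Rightarrow$(SG1) is obtained by letting $\ssS$ act on the compact convex set of means and reading off a right invariant mean as a fixed point; conversely (SG1)$\Rightarrow$(SG2) produces a fixed point of a given action as the $m$-barycentre of a single orbit, where $m$ is the invariant mean. The unital refinement (SG2$'$) will then come almost for free.

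\emph{The direction} (SG2)$\Rightarrow$(SG1). Let $M$ be the set of all means on $\FfF$. It is convex, non-empty (point evaluations $f\mapsto f(t)$ are means) and, since $\|\phi\|\leqsl1$ and the defining inequalities $\inf f(\ssS)\leqsl\phi(f)\leqsl\sup f(\ssS)$ are weak$^*$-closed, $M$ is weak$^*$-compact by Banach--Alaoglu. I let $\ssS$ act by $(\phi.s)(f)\df\phi(f_s)$. Using the identity $(f_t)_s=f_{st}$ one checks this is a right action; it is affine (indeed linear) in $\phi$, and $f_s(\ssS)\subseteq f(\ssS)$ shows $\phi.s\in M$. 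Each $\phi\mapsto\phi.s$ is weak$^*$-continuous, and the regularity of the action matches the function space: when $\FfF=C_{weak}(\ssS)$ the maps $s\mapsto f_s$ are weakly continuous, so $s\mapsto\phi(f_s)$ is continuous for every $\phi\in M$ (means extend to $C_r(\ssS)^*$ by Hahn--Banach) and the action is separately continuous; when $\FfF=C_{norm}(\ssS)$ the maps $s\mapsto f_s$ are norm continuous and a short estimate using $\|\phi\|\leqsl1$ upgrades this to joint continuity. Hence (SG2) yields a fixed point $\phi_0$, i.e.\ $\phi_0(f_s)=\phi_0(f)$ for all $f,s$, which is exactly a right invariant mean; so $\ssS$ is strongly, resp.\ weakly, right amenable. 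If $\ssS$ is unital then $f_1=f$, whence $\phi.1=\phi$ and the above action is automatically unital; thus the same argument deduces (SG1) from (SG2$'$), while (SG2)$\Rightarrow$(SG2$'$) is trivial.

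\emph{The direction} (SG1)$\Rightarrow$(SG2). Fix a right invariant mean $m$ on $\FfF$, a Hausdorff locally convex space $V$, a non-empty compact convex $K\subseteq V$, and an affine right action $(x,s)\mapsto x.s$ that is jointly (resp.\ separately) continuous, and fix $x_0\in K$. The key lemma is that for every continuous $h\colon K\to\RRR$ the orbit function $\ssS\ni s\mapsto h(x_0.s)$ lies in $\FfF$; I only need it for $h=\Lambda|_K$ and $h=\Lambda(\,\cdot\,.s_0)$ with $\Lambda\in V^*$. Writing $g_\Lambda(s)\df\Lambda(x_0.s)$, the map $\Lambda\mapsto m(g_\Lambda)$ is linear on $V^*$ and satisfies $\inf\Lambda(K)\leqsl m(g_\Lambda)\leqsl\sup\Lambda(K)$ (as $x_0.s\in K$ and $m$ is a mean); hence, by the standard fact that a linear functional dominated by the support function of a compact convex set is evaluation at one of its points, there is $a\in K$ with $\Lambda(a)=m(g_\Lambda)$ for all $\Lambda$. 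By the defining property of this barycentre the relation $h(a)=m(s\mapsto h(x_0.s))$ extends to all continuous affine $h$ (uniform limits of functions $\Lambda|_K+c$, $\Lambda\in V^*$, $c\in\RRR$). Taking $h=\Lambda(\,\cdot\,.s_0)$ and noting that $s\mapsto h(x_0.s)$ equals $(g_\Lambda)_{s_0}$, right invariance gives $\Lambda(a.s_0)=h(a)=m((g_\Lambda)_{s_0})=m(g_\Lambda)=\Lambda(a)$ for every $\Lambda\in V^*$ and $s_0\in\ssS$; as $V$ is Hausdorff, $a.s_0=a$, so $a$ is the sought fixed point.

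\emph{The main obstacle.} Everything hinges on the membership lemma, which is precisely where joint versus separate continuity enters, and I expect the separately continuous case to be the hard part. That $s\mapsto h(x_0.s)$ lies in $C_r(\ssS)$ is routine. In the jointly continuous case I would estimate, for $f(s)\df h(x_0.s)$, the quantity $\|f_s-f_{s'}\|=\sup_u|h((x_0.u).s)-h((x_0.u).s')|\leqsl\sup_{y\in K}|h(y.s)-h(y.s')|$ and deduce norm continuity of $s\mapsto f_s$ (and, with $h$ replaced by $y\mapsto h(y.t)$, of $s\mapsto f_{st}$) from uniform continuity of $(y,s)\mapsto h(y.s)$ on the compact $K$; thus $f\in C_{norm}(\ssS)$. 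The separately continuous case is the genuine difficulty, since weak continuity of $s\mapsto f_s$ must be tested against every functional in $C_r(\ssS)^*$, where dominated convergence is not directly available. My device is to descend to $C(K)$: the orbit map $\iota\colon u\mapsto x_0.u$ is continuous and induces a bounded, hence weak-to-weak continuous, composition operator $Q\colon C(K)\to C_b(\ssS)$, $Q(\varphi)=\varphi\circ\iota$, with $f_s=Q(h\circ R_s)$ for $R_s(y)\df y.s$. Since $K$ is compact, $C(K)^*$ consists of signed measures, and against each of them dominated convergence does apply---legitimately, because $s\mapsto y.s$ is continuous for fixed $y$---so $s\mapsto h\circ R_s$ is weakly continuous into $C(K)$; composing with $Q$, and extending functionals from $C_r(\ssS)$ to $C_b(\ssS)$ by Hahn--Banach, transports this to weak continuity of $s\mapsto f_s$ in $C_r(\ssS)$, while the same computation with $h\circ R_t$ in place of $h$ handles $s\mapsto f_{st}$. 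This gives $f\in C_{weak}(\ssS)$ and finishes the harder implication.
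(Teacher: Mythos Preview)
Your approach is the same as the paper's: act on the weak* compact set of means for (SG2)$\Rightarrow$(SG1), and push the invariant mean along a single orbit to produce a barycentre for (SG1)$\Rightarrow$(SG2). The paper works directly in $CA(K)$ and asserts the membership $\hat u\in\FfF$ without proof; you go further by actually supplying the ``membership lemma,'' and your jointly continuous case is correct.

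There is, however, one genuine gap in the separately continuous case. You reduce to showing that $s\mapsto h\circ R_s$ is weakly continuous into $C(K)$ and justify this by dominated convergence. Dominated convergence is a sequential theorem; for nets it fails even for uniformly bounded continuous integrands on $[0,1]$ (index by finite subsets of $[0,1]$ under inclusion and build tent functions vanishing on the given finite set but with integral $>1/2$). Since $\ssS$ carries an arbitrary topology, you cannot restrict to sequences, so this step does not go through as written. The repair is short once you notice you only need affine $h$: then $h\circ R_s\in CA(K)$, and every bounded linear functional on $CA(K)$ is a difference $\alpha\,\delta_a-\beta\,\delta_b$ of two point evaluations (extend it to a signed measure on $K$, split into positive and negative parts, and take their barycentres---exactly the computation behind your density claim $CA(K)=\overline{V^*|_K+\RRR}$). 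Hence on $CA(K)$ the weak and pointwise topologies coincide, and pointwise continuity of $s\mapsto h\circ R_s$ (which is precisely separate continuity of the action) transports through your weak-to-weak continuous composition operator $Q$ to give the required weak continuity of $s\mapsto f_s$ and $s\mapsto f_{st}$ in $C_r(\ssS)$. With this adjustment your proof is complete and, in this half, more detailed than the paper's sketch.
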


Here we postpone the proof of the above theorem to Section~5 where we give its
sketch for the sake of completeness.

\begin{exs}{amen}
\begin{enumerate}[(A)]
\item Let \(\ggG\) be a compact topological group. Then \(C_{weak}(\ggG) =
 C_{norm}(\ggG) = C(\ggG)\) and \(\ggG\) is strongly right amenable.
 The functional induced by the Haar measure of \(\ggG\) is a \underbar{unique}
 invariant mean on \(C(\ggG)\).
\item Every abelian semigroup \(\ssS\) equipped with the discrete topology is
 strongly right amenable and thus \(\ssS\) is such when equipped with totally
 arbitrary topology. This is a consequence of a well-known Markov-Kakutani fixed
 point theorem \cite{mar}, \cite{kak}. The fact that abelian SGT's are strongly
 right amenable will be used in the proof of all results formulated in
 Section~1.
\item When the topology of an SGT \(\ssS\) is discrete, \(C_{weak}(\ssS) =
 C_{norm}(\ssS) =\) the algebra of all bounded real-valued functions on \(\ssS\)
 and thus \(\ssS\) is strongly right amenable iff it is weakly right amenable,
 iff it is right amenable as an abstract semigroup (considered without topology)
 \cite{run}.
\item Let \(\ggG\) be a locally compact topological group and \(UC(\ggG)\) stand
 for the algebra of all those functions \(f \in C_b(\ggG)\) that are uniformly
 continuous (cf.\ \cite{run}). It can be easily shown that \(UC(\ggG) \subset
 C_{norm}(\ggG) \subset C_{weak}(\ggG) \subset C_b(\ggG)\) and thus, thanks to
 Theorem~1.1.9 in \cite{run}, \(\ggG\) is strongly right amenable iff it is
 weakly right amenable, iff \(\ggG\) is amenable in a classical sense (that is,
 if there is an invariant mean on \(L^{\infty}(\ggG)\)).
\item It is well known that the free (non-abelian) group \(\ffF_2\) in two
 generators is non-amenable as an abstract group---or, equivalently, is not
 weakly right amenable when equipped with the discrete topology. It is also well
 known that there are two orthogonal matrices \(U, V \in O_3\) that generate
 a group \(\ggG\) isomorphic to \(\ffF_2\). Since the closure \(\bar{\ggG}\) of
 \(\ggG\) (in \(O_3\)) is compact, \(\bar{\ggG}\) is (strongly and thus) weakly
 right amenable. In Section~5 we will show that a dense subgroup of a weakly
 right amenable topological group is weakly right amenable as well (see
 \THM{dense}). We conclude that \(\ffF_2\) admits a separable metrizable
 topology that makes \(\ffF_2\) a weakly right amenable topological group.
\end{enumerate}
\end{exs}

More on right amenability the reader can find in Section~5.

\section{Applications}

In this section we show how right amenability can be applied in operator theory.
Although our first result has nothing in common with amenability, the method of
its proof was one of our motivations.

\begin{pro}{uniq}
Let a closed linear subspace \(Y\) of a Banach space \(X\) and \(T \in \llL(X)\)
be such that there is a unique minimal projection \(P\) of \(X\) onto \(Y\),
\(T(Y) = Y\), \(T\restriction{Y}\) is an isometry and \(\|T\| = 1\). Then \(TP =
PT\).
\end{pro}
\begin{proof}
Observe that \(Q \df (T\restriction{Y})^{-1} P T\) is a projection onto \(Y\)
such that \(\|Q\| \leqsl \|P\|\). Since \(P\) is unique minimal, we have \(Q =
P\) and hence \(TP = PT\).
\end{proof}

The idea of the above proof will be applied in the proof of the following

\begin{pro}{proj}
Let \(\Phi\dd \ssS \ni s \mapsto T_s \in \llL(X)\) be a homomorphism of
a strongly \textup{[}resp.\ weakly\textup{]} right amenable SGT \(\ssS\) into
\(\llL(X)\) where \(X\) is a Banach space. Suppose \(Y\) is a closed linear
subspace of \(X\) and:
\begin{enumerate}[\upshape(p1)]\addtocounter{enumi}{-1}
\item \(\Phi\) is continuous in the strong operator topology \textup{[}resp.\
 operator norm topology\textup{]} of \(\llL(X)\);
\item for any \(s \in \ssS\), \(T_s(Y) = Y\) and the restriction
 \(T_s\restriction{Y}\) of \(T_s\) to \(Y\) is an isomorphism;
\item \(Y\) is a dual Banach space; that is, \(Y\) is linearly isometric to
 the dual space of some Banach space \textup{(}and thus the closed unit ball
 \(B_Y\) of \(Y\) is compact with respect to certain locally convex topology,
 called further weak*, of \(Y\)\textup{)};
\item for any \(s \in \ssS\), \(T_s\restriction{Y}\) is weak* continuous on
 \(B_Y\);
\item the function \(\ssS \ni s \mapsto \|(T_s\restriction{Y})^{-1}\| \in \RRR\)
 is locally bounded.
\end{enumerate}
Then \tfcae
\begin{enumerate}[\upshape(i)]
\item there exists a projection \(P\dd X \to Y\) such that \(PT_s = T_s P\) for
 all \(s \in \ssS\);
\item there is a projection \(Q\dd X \to Y\) such that
 \begin{equation}\label{eqn:bdd}
 \sup_{s \in \ssS} \|(T_s\restriction{Y})^{-1}QT_s\| < \infty.
 \end{equation}
\end{enumerate}
Moreover, if \textup{(ii)} holds, then there exists a projection \(P_0\dd X \to
Y\) such that \(P_0\) commutes with all operators \(T_s\) and \(\|P_0\| \leqsl
\|P\|\) for any other projection \(P\dd X \to Y\) commuting with all \(T_s\).
\end{pro}

The part concerning a `minimal' projection \(P_0\) generalises a classical
theorem due to Cheney and Morris \cite{c-m} on the existence of minimal
projections for complemented subspaces that are dual Banach spaces. It is worth
noting here that the above condition (p4) is automatically fulfilled (thanks to
(p0)) in the version of this proposition for weakly right amenable SGT's.

\begin{proof}[Proof of \PRO{proj}]
First of all, (p3) combined with Kre\u{\i}n-Smulian theorem (that characterises
weak* closed convex sets) implies that
\begin{itemize}
\item[(p3')] for any \(s \in \ssS\), \(T_s\restriction{Y}\) is a dual operator
 and \((T_s\restriction{Y})^{-1}\) is weak* continuous (on the whole \(Y\)).
\end{itemize}
For a projection \(P\dd X \to Y\) and \(s \in \ssS\) we write \(P.s\) to denote
\((T_s\restriction{Y})^{-1}PT_s\). Observe that \(P.s\dd X \to Y\) is again
a projection and that
\begin{equation}\label{eqn:aux1}
(P,s) \mapsto P.s
\end{equation}
is an affine right action of \(\ssS\) on the set of all projection from \(X\)
onto \(Y\).\par
Assume \(Q\) is as specified in (ii). Put \(M \df \sup_{s\in\ssS} \|Q.s\| <
\infty\). Let the set \(\Delta\) of all projections \(P\dd X \to Y\) with
\(\|P\| \leqsl M\) and \(\sup_{s\in\ssS} \|P.s\| \leqsl M\) be equipped with
the topology of pointwise weak* convergence (we call it \textsl{weak* operator
topology}). It is easily seen that \(\Delta\) is convex, \(Q.s \in \Delta\) and
\(P.s \in \Delta\) for any \(P \in \Delta\) and \(s \in \ssS\). Moreover, it
follows from (p3') that \(\Delta\) is compact. So, by \THM{fpp}, it suffices
to show that the action \eqref{eqn:aux1} is separately [resp.\ jointly]
continuous.\par
It follows from (p3') that \(\Delta \ni P \mapsto P.s \in \Delta\) is continuous
for each \(s \in \ssS\). Further, if \(x\) is an arbitrary vector of \(X\) and
\((s_{\sigma})_{\sigma\in\Sigma}\) is a net in \(\ssS\) convergent to \(s \in
\ssS\), then \(\lim_{\sigma\in\Sigma} T_{s_{\sigma}} x = T_s x\) and hence
(thanks to (p4)) the nets \((T_{s_{\sigma}} x)_{\sigma\in\Sigma}\) and
\((\|(T_{s_{\sigma}}\restriction{Y})^{-1}\|)_{\sigma\in\Sigma}\) are eventually
bounded. This implies that \((T_{s_{\sigma}}\restriction{Y})^{-1}\) converge in
the strong operator topology to \((T_s\restriction{Y})^{-1}\) and
\(\lim_{\sigma\in\Sigma} (P.{s_{\sigma}}) x = (P.s) x\) for every \(P \in
\Delta\). So, the action is separately continuous. Finally, assume that, in
addition, \(\ssS\) is weakly right amenable, and then---by (p0)---we get
\begin{equation}\label{eqn:aux2}
\lim_{\sigma\in\Sigma} \|T_{s_{\sigma}} - T_s\| = 0
\end{equation}
as well as \(\lim_{\sigma\in\Sigma} \|(T_{s_{\sigma}}\restriction{Y})^{-1}
- (T_s\restriction{Y})^{-1}\| = 0\). Now if \((P_{\sigma})_{\sigma\in\Sigma}\)
is a net in \(\Delta\) convergent (in the topology of \(\Delta\)) to \(P \in
\Delta\), then the net \(((T_{s_{\sigma}}\restriction{Y})^{-1}
P_{\sigma})_{\sigma\in\Sigma}\) is bounded and converges in the weak* operator
topology to \((T_s\restriction{Y})^{-1} P\) (thanks to (p3') and the boundedness
of \(\Delta\)), which---combined with \eqref{eqn:aux2}---implies that
\(P_{\sigma}.s_{\sigma}\) converge to \(P.s\) in the topology of \(\Delta\).\par
To complete the proof, observe that the additional claim of the proposition
follows from the compactness of \(\Delta\) in the weak* operator topology, and
from the lower semicontinuity of the operator norm in this topology.
\end{proof}

\begin{rem}{weak}
\PRO{proj} in the version for weak right amenability can easily be generalised
(in a sense) to closed linear subspaces \(Y\) of \(X\) that are not dual Banach
spaces. More precisely, if all assumptions of that result (for weak right
amenability) hold, except for (p2) and (p3), and condition (ii) (therein) is
fulfilled, then there exists a projection \(P\dd X^{**} \to Y^{\perp\perp}\)
that commutes with all \(T_s^{**}\). Indeed, it suffices to observe that under
the settings described above, \underline{all} assumptions of \PRO{proj} are
satisfied for the homomorphism \(s \mapsto T_s^{**}\) (with \(X\) and \(Y\)
replaced by \(X^{**}\) and \(Y^{\perp\perp}\), respectively).
\end{rem}

\begin{dfn}{min}
When \(Y\) is a closed linear subspace of a Banach space \(X\) and \(\Phi\dd
\ssS \ni s \mapsto T_s \in \llL(X)\) is a homomorphism, a projection \(P_0\dd X
\to Y\) is called \textit{\(\Phi\)-minimal} if \(P_0\) commutes with all
\(T_s\), and \(\|P_0\| \leqsl \|P\|\) for any other projection \(P\dd X \to Y\)
that commutes with all \(T_s\).
\end{dfn}

When \(Y\) is a finite-dimensional subspace, \PRO{proj} can be strengthened
as follows.

\begin{pro}{fin}
Let \(\Phi\dd \ssS \ni s \mapsto T_s \in \llL(X)\) be a homomorphism of
a strongly \textup{[}resp.\ weakly\textup{]} right amenable SGT \(\ssS\) into
\(\llL(X)\) where \(X\) is a Banach space. Suppose \(Y\) is a finite-dimensional
linear subspace of \(X\) and:
\begin{enumerate}[\upshape(fp1)]\addtocounter{enumi}{-1}
\item \(\Phi\) is continuous in the weak operator topology \textup{[}resp.\
 strong operator topology\textup{]} of \(\llL(X)\);
\item for any \(s \in \ssS\), \(T_s(Y) = Y\).
\end{enumerate}
Then \tfcae
\begin{enumerate}[\upshape(i)]
\item there exists a projection \(P\dd X \to Y\) such that \(PT_s = T_s P\) for
 all \(s \in \ssS\);
\item there is a projection \(Q\dd X \to Y\) such that
 \[\sup_{s \in \ssS} \|(T_s\restriction{Y})^{-1}QT_s\| < \infty.\]
\end{enumerate}
Moreover, if \textup{(ii)} holds, then there exists a \(\Phi\)-minimal
projection \(P_0\dd X \to Y\).
\end{pro}

The proof is quite similar to the previous one and thus we skip it. (Use
the fact that under the assumptions of \PRO{fin}, both the functions
\[\ssS \ni s \mapsto T_s\restriction{Y} \in \llL(Y) \qquad \textup{and} \qquad
\ssS \ni s \mapsto (T_s\restriction{Y})^{-1} \in \llL(Y)\]
are continuous in the operator norm topology of \(\llL(Y)\).)

Consequences of \PRO[s]{proj} and \ref{pro:fin} follow. The following result
(with a different proof) for compact topological groups can be found in
\cite{rud} (see also Theorem~5.18 in \cite{rd2}) with a slightly different
settings.

\begin{thm}{bdd}
Let \(\Phi\dd \ssS \ni s \mapsto T_s \in \llL(X)\) be a homomorphism of
a strongly \textup{[}resp.\ weakly\textup{]} right amenable SGT \(\ssS\) into
\(\llL(X)\) where \(X\) is a Banach space. Suppose \(Y\) is a closed linear
subspace of \(X\) such that conditions \textup{(p0)--(p3)} of \PRO{proj} are
fulfilled and:
\begin{itemize}
\item[\upshape(p4')] the quantity \(m_Y(\Phi) \df \sup_{s\in\ssS} (\|T_s\| \cdot
 \|(T_s\restriction{Y})^{-1}\|)\) is finite.
\end{itemize}
Then there exists a \(\Phi\)-minimal projection \(P_0\dd X \to Y\) iff \(Y\) is
a complemented subspace of \(X\). Moreover, if \(Y\) is complemented, each
\(\Phi\)-minimal projection \(P\dd X \to Y\) satisfies
\begin{equation}\label{eqn:aux3}
\|P\| \leqsl m_Y(\Phi) \lambda(X,Y).
\end{equation}
\end{thm}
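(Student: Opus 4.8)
The plan is to deduce this theorem from \PRO{proj}. The key observation is that condition (p4') is a strengthening of (p4): if $m_Y(\Phi) < \infty$, then in particular $\sup_{s\in\ssS}\|(T_s\restriction{Y})^{-1}\| < \infty$, which is more than the local boundedness required in (p4). Hence all hypotheses (p0)--(p4) of \PRO{proj} are in force.

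First I would prove the equivalence "a $\Phi$-minimal projection exists iff $Y$ is complemented". The forward implication is trivial: a $\Phi$-minimal projection is in particular a projection onto $Y$, so $Y$ is complemented. For the converse, suppose $Y$ is complemented and fix any projection $R\dd X \to Y$. Then for each $s \in \ssS$ the projection $R.s = (T_s\restriction{Y})^{-1}RT_s$ satisfies
\[
\|R.s\| \leqsl \|(T_s\restriction{Y})^{-1}\|\cdot\|R\|\cdot\|T_s\| \leqsl m_Y(\Phi)\,\|R\|,
\]
so $\sup_{s\in\ssS}\|R.s\| \leqsl m_Y(\Phi)\,\|R\| < \infty$. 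Thus $Q \df R$ witnesses condition (ii) of \PRO{proj}, and the "moreover" part of \PRO{proj} produces a $\Phi$-minimal projection $P_0\dd X \to Y$.

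It remains to establish the norm bound \eqref{eqn:aux3} for an arbitrary $\Phi$-minimal projection $P$. Since all $\Phi$-minimal projections have the same norm (each is $\leqsl$ the other by definition), it suffices to bound the particular $P_0$ constructed above, and for this we revisit the compactness argument in the proof of \PRO{proj}. There the fixed point is found inside the set $\Delta$ of projections $P$ with $\|P\| \leqsl M$ and $\sup_{s}\|P.s\| \leqsl M$, where $M = \sup_{s}\|Q.s\|$. Taking for $Q$ a projection onto $Y$ with $\|Q\|$ as close as we wish to $\lambda(X,Y)$, the displayed estimate above gives $M \leqsl m_Y(\Phi)\,\|Q\|$, so every element of $\Delta$—in particular the fixed point $P_0$—has norm at most $m_Y(\Phi)\,\|Q\|$. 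Letting $\|Q\| \to \lambda(X,Y)$ yields $\|P_0\| \leqsl m_Y(\Phi)\,\lambda(X,Y)$, and by $\Phi$-minimality the same bound holds for every $\Phi$-minimal projection.

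The only mild subtlety—and the place I would be most careful—is making the last paragraph rigorous: the fixed-point theorem \THM{fpp} is applied to a \emph{fixed} compact convex set $\Delta$ determined by a fixed choice of $Q$, so one must either run the construction of \PRO{proj} afresh for each $Q$ with $\|Q\|$ near $\lambda(X,Y)$ and pass to the infimum, or simply note that the infimum $\lambda(X,Y)$ is attained as a limit and that the norm of $P_0$ produced for a given $Q$ is already $\leqsl m_Y(\Phi)\,\|Q\|$. Either way no compactness of the family of minimal projections is needed, since $\Phi$-minimality pins down $\|P\|$ uniquely across all $\Phi$-minimal $P$; this closes the argument.
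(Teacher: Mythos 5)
Your overall strategy is the paper's: reduce everything to \PRO{proj} via its condition (ii), and obtain the norm bound by re-running the fixed-point argument of \PRO{proj} inside a set $\Delta$ whose radius is controlled by $m_Y(\Phi)$ and (approximately) $\lambda(X,Y)$. The equivalence with complementedness and the final estimate are handled correctly; your device of letting $\|Q\|\to\lambda(X,Y)$ even lets you avoid the Cheney--Morris theorem, which the paper invokes to produce a projection with $\|Q\|=\lambda(X,Y)$ exactly, and the observation that all $\Phi$-minimal projections share the same norm correctly transfers the bound from the one you construct to an arbitrary one.

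However, your very first step is false as stated. From $m_Y(\Phi)=\sup_{s\in\ssS}(\|T_s\|\cdot\|(T_s\restriction{Y})^{-1}\|)<\infty$ you cannot conclude $\sup_{s\in\ssS}\|(T_s\restriction{Y})^{-1}\|<\infty$, because nothing bounds $\|T_s\|$ away from $0$. For instance, take $X=Y$ a nonzero Banach space, $\ssS=(0,1]$ with multiplication and its usual topology, and $T_s=sI$; then $\|T_s\|\cdot\|(T_s\restriction{Y})^{-1}\|=1$ for every $s$, yet $\|(T_s\restriction{Y})^{-1}\|=1/s$ is unbounded. So (p4') does not ``in particular'' give the global bound you assert. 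What you actually need is only the local boundedness (p4), and it does follow from (p4'), but via (p0) and (p1) rather than by inspection: if a net $s_\sigma\to s$ had $\|(T_{s_\sigma}\restriction{Y})^{-1}\|\to\infty$, then (p4') would force $\|T_{s_\sigma}\|\to 0$, hence $T_s=0$ by the continuity assumption (p0), hence $Y=T_s(Y)=\{0\}$ by (p1), contradicting the blow-up. Insert this argument (it is exactly the paper's opening step) and the rest of your proof goes through.
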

\begin{proof}
First of all, observe that (p4') implies (p4) (from \PRO{proj}): otherwise we
would have a net \((s_{\sigma})_{\sigma\in\Sigma} \subset \ssS\) convergent to
some \(s \in \ssS\) such that
\begin{equation}\label{eqn:aux7}
\lim_{\sigma\in\Sigma} \|(T_{s_{\sigma}}\restriction{Y})^{-1}\| = \infty.
\end{equation}
Then (p4') would imply that \(\lim_{\sigma\in\Sigma} \|T_{s_{\sigma}}\| = 0\)
and consequently \(T_s = 0\), and \(Y = \{0\}\) (by (p1)), which contradics
\eqref{eqn:aux7}.\par
Now thanks to \PRO{proj}, we only need to prove that a \(\Phi\)-minimal
projection \(P\dd X \to Y\) satisfies \eqref{eqn:aux3} provided \(Y\) is
complemented. By Cheney-Morris' theorem \cite{c-m}, there exists a projection
\(Q\dd X \to Y\) such that \(\|Q\| = \lambda(X,Y)\). Now repeat the proof of
\PRO{proj} with \(M \df m_Y(\Phi) \lambda(X,Y)\) to conclude the existence of
a projection \(P\dd X \to Y\) that commutes with all \(T_s\) and satisfies
\eqref{eqn:aux3}. The proof is complete.
\end{proof}

The proof of the next result goes similarly as above and thus is omitted.

\begin{thm}{fin}
Let \(\Phi\dd \ssS \ni s \mapsto T_s \in \llL(X)\) be a homomorphism of
a strongly \textup{[}resp.\ weakly\textup{]} right amenable SGT \(\ssS\) into
\(\llL(X)\) where \(X\) is a Banach space. Suppose \(Y\) is a finite-dimensional
linear subspace of \(X\) such that conditions \textup{(fp0)--(fp1) (}of
\PRO{proj}\textup{)} and \textup{(p4') (}of \THM{bdd}\textup{)} hold. Then there
exists a \(\Phi\)-minimal projection \(P\dd X \to Y\), and \(\|P\| \leqsl
m_Y(\Phi) \lambda(X,Y)\).
\end{thm}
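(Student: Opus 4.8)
The plan is to imitate the proof of \THM{bdd} essentially line by line, with \PRO{fin} playing the role there played by \PRO{proj}. The first step is to record the two facts that make this work in the finite-dimensional setting. Since \(Y\) is finite-dimensional it is automatically complemented in \(X\), and --- this being a classical special case of Cheney--Morris' theorem \cite{c-m} --- there is a projection \(Q\dd X \to Y\) with \(\|Q\| = \lambda(X,Y)\). Moreover, because \(T_s(Y) = Y\) and \(\dim Y < \infty\), each \(T_s\restriction{Y}\) is an invertible operator on \(Y\), so the trivial bound \(\|(T_s\restriction{Y})^{-1}QT_s\| \leqsl \|T_s\|\cdot\|Q\|\cdot\|(T_s\restriction{Y})^{-1}\|\) together with \textup{(p4')} gives
\[\sup_{s\in\ssS}\|(T_s\restriction{Y})^{-1}QT_s\| \leqsl m_Y(\Phi)\,\lambda(X,Y) < \infty.\]
Hence condition \textup{(ii)} of \PRO{fin} is fulfilled by this \(Q\), and \PRO{fin} at once yields a \(\Phi\)-minimal projection \(P_0\dd X \to Y\); this settles the existence assertion.

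For the norm estimate I would re-run the fixed point argument behind \PRO{fin} (equivalently, the one in the proof of \PRO{proj}, which transfers to finite-dimensional \(Y\) as noted just after \PRO{fin}), but starting from the constant \(M \df m_Y(\Phi)\,\lambda(X,Y)\) in place of \(\sup_{s\in\ssS}\|Q.s\|\). With this choice the set \(\Delta\) of all projections \(P\dd X \to Y\) with \(\|P\| \leqsl M\) and \(\sup_{s\in\ssS}\|P.s\| \leqsl M\) still contains \(Q\) (by the displayed inequality), is convex, and is invariant under the affine right action \((P,s)\mapsto P.s \df (T_s\restriction{Y})^{-1}PT_s\); it is moreover compact in the weak* operator topology (for finite-dimensional \(Y\) this is just pointwise convergence, so compactness follows from Tychonoff's theorem, \(B_Y\) being norm-compact), while \textup{(fp0)} makes this action separately \textup{[}resp.\ jointly\textup{]} continuous exactly as in the proof of \PRO{proj}. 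By \THM{fpp} the action has a fixed point, i.e.\ a projection \(P\dd X \to Y\) with \(PT_s = T_sP\) for all \(s \in \ssS\) and \(\|P\| \leqsl M\). Since any \(\Phi\)-minimal projection has norm at most \(\|P\|\), the bound \(\|P\| \leqsl m_Y(\Phi)\,\lambda(X,Y)\) follows.

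I do not anticipate a genuine obstacle, since all the analytic substance is already encapsulated in \PRO{fin}, \THM{fpp} and Cheney--Morris' theorem; what remains is bookkeeping. The two points deserving a little care are: (a) that, although \textup{(fp0)} gives only weak \textup{[}resp.\ strong\textup{]} operator continuity of \(\Phi\) on \(X\), this still suffices to run the continuity part of the fixed point argument, because for a fixed bounded \(P\dd X \to Y\) a weakly convergent net \(T_{s_\sigma}x\) is carried by \(P\) to a net converging in the norm of the finite-dimensional space \(Y\), and \(s\mapsto(T_s\restriction{Y})^{-1}\) is then automatically norm-continuous (inversion being continuous on the open set of invertible operators of a finite-dimensional space, in which every \(T_s\restriction{Y}\) lies); and (b) that \(M\) is taken large enough at the outset for the minimal-norm projection \(Q\) to belong to \(\Delta\), which is exactly the role of hypothesis \textup{(p4')}.
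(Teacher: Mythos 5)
Your proposal is correct and follows essentially the same route as the paper, which omits the proof with the remark that it "goes similarly" to that of \THM{bdd}: take a norm-minimal projection \(Q\) onto the (automatically complemented, finite-dimensional) subspace \(Y\), use \textup{(p4')} to bound \(\sup_{s}\|(T_s\restriction{Y})^{-1}QT_s\|\) by \(m_Y(\Phi)\lambda(X,Y)\), and re-run the fixed-point argument of \PRO{proj}/\PRO{fin} with \(M=m_Y(\Phi)\lambda(X,Y)\). Your point (a) is exactly the parenthetical remark the paper makes after \PRO{fin} (norm continuity of \(s\mapsto T_s\restriction{Y}\) and of \(s\mapsto(T_s\restriction{Y})^{-1}\) in \(\llL(Y)\)), so nothing is missing.
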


For simplicity, for any homomorphism \(\Phi\dd \ssS \to \llL(X)\) (where \(X\)
is a Banach space) we denote by \(\NnN(\Phi)\) and \(\RrR(\Phi)\)
the intersection of the kernels and, respectively, the closed linear span of
the ranges of all operators of the form \(\Phi(s) - I\) where \(s \in \ssS\) and
\(I\) is the identity operator on \(X\). When \(\Phi\) is the identity function
(and \(\ssS \subset \llL(X)\)), we write \(\NnN(\ssS)\) and \(\RrR(\ssS)\)
instead of \(\NnN(\Phi)\) and \(\RrR(\Phi)\).

\begin{cor}{proj}
For an SGT \(\ssS\) \tfcae
\begin{enumerate}[\upshape(a)]
\item \(\ssS\) is strongly \textup{[}resp.\ weakly\textup{]} right amenable;
\item for any homomorphism \(\Phi\dd \ssS \to \llL(X)\) continuous in the weak
 \textup{[}resp.\ strong\textup{]} operator topology with finite quantity
 \(M(\Phi) \df \sup_{s\in\ssS} \|T_s\|\) one has:
 \begin{itemize}
 \item \(\NnN(\Phi) \cap \RrR(\Phi) = \{0\}\),
 \item the subspace
  \begin{equation}\label{eqn:deco}
  \DdD(\Phi) \df \NnN(\Phi) + \RrR(\Phi)
  \end{equation}
  is closed in \(X\), and
 \item the projection \(P\dd \DdD(\Phi) \to \NnN(\Phi)\) induced by
 the decomposition \eqref{eqn:deco} has norm not greater than \(M(\Phi)\).
 \end{itemize}
\end{enumerate}
\end{cor}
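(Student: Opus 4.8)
I would prove the two implications separately. The implication (a)~$\Rightarrow$~(b) is essentially the Alaoglu--Birkhoff / mean ergodic theorem for amenable semigroups; I would carry it out with an averaging \emph{functional} rather than with a compact set of projections as in \PRO{proj}, since here $\NnN(\Phi)$ need not be a dual Banach space. So fix a right-invariant mean $\phi$ on $C_{norm}(\ssS)$ [resp.\ $C_{weak}(\ssS)$] and a homomorphism $\Phi\dd s\mapsto T_s$ as in (b), and put $M\df M(\Phi)$. For $x\in X$ and $\xi\in X^*$ set $f_{x,\xi}(s)\df\scalar{T_sx}{\xi}$ (a real-valued function; in the complex case apply $\phi$ to its real and imaginary parts separately throughout). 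The first step, routine but indispensable, is to verify that $f_{x,\xi}\in C_{norm}(\ssS)$ [resp.\ $\in C_{weak}(\ssS)$]: note that $z\mapsto f_{z,\xi}$ is a bounded linear operator $X\to C_r(\ssS)$ (of norm $\leqsl M\|\xi\|$ since $\|T_t\|\leqsl M$ for all $t$), that $(f_{x,\xi})_s=f_{T_sx,\xi}$ and $(f_{x,\xi})_{st}=f_{T_sT_tx,\xi}$, and that a bounded linear operator is continuous both for the norm and for the weak topologies; combined with norm- [resp.\ weak-] continuity of $s\mapsto T_sx$ (which is exactly (p0)-type continuity), this places $f_{x,\xi}$ in the required space.

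Next, define $\ell_x\in X^{**}$ by $\scalar{\ell_x}{\xi}\df\phi(f_{x,\xi})$; since $\phi$ has norm $\leqsl1$, $\|\ell_x\|\leqsl\sup_s\|T_sx\|\leqsl M\|x\|$. Right-invariance of $\phi$ forces $\phi(f_{T_uz-z,\xi})=\phi\bigl((f_{z,\xi})_u-f_{z,\xi}\bigr)=0$ for all $u,z,\xi$, hence (as $z\mapsto f_{z,\xi}$ is bounded and $\phi$ is $\|\cdot\|_\infty$-contractive) $\phi(f_{r,\xi})=0$ for every $r\in\RrR(\Phi)$; and $f_{n,\xi}$ is the constant $\scalar{n}{\xi}$ for $n\in\NnN(\Phi)$, so $\ell_n=n$. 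The three bullets then drop out. If $x\in\NnN(\Phi)\cap\RrR(\Phi)$, then $\scalar{x}{\xi}=\phi(f_{x,\xi})=0$ for every $\xi$, so $\NnN(\Phi)\cap\RrR(\Phi)=\{0\}$; consequently each $x\in\DdD(\Phi)$ is uniquely $x=n+r$ with $n\in\NnN(\Phi)$, $r\in\RrR(\Phi)$, and $\scalar{\ell_x}{\xi}=\scalar{n}{\xi}$, so $\ell_x=n\in X$, i.e.\ the projection $P\dd\DdD(\Phi)\to\NnN(\Phi)$ induced by the decomposition is $x\mapsto\ell_x$ and $\|Px\|=\|\ell_x\|\leqsl M\|x\|$; and $\DdD(\Phi)$ is closed, because if $d_k=n_k+r_k\to d$ in $X$ then $\|n_k-n_j\|=\|P(d_k-d_j)\|\leqsl M\|d_k-d_j\|$, so $(n_k)$ is Cauchy with limit $n\in\NnN(\Phi)$ and $r_k\to d-n\in\RrR(\Phi)$ (both subspaces being closed), whence $d\in\DdD(\Phi)$. (Note one never produces a bounded projection on all of $X$, which would in general exist only into $X^{**}$; the functional $\ell_{(\cdot)}$ is shown to take values in $X$ only on $\DdD(\Phi)$, where it is the algebraic projection.)

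For (b)~$\Rightarrow$~(a), I would apply (b) to the single canonical representation $\Phi_0$ of $\ssS$ on $X_0\df C_{norm}(\ssS)$ [resp.\ $C_{weak}(\ssS)$] --- a Banach space, being norm-closed in $C_b(\ssS)$ --- given by $T_sf\df f_s$. A routine check gives that $T_s$ maps $X_0$ into itself, that $s\mapsto T_s$ is a homomorphism with $\|T_s\|\leqsl1$, and that it is continuous in the strong [resp.\ weak] operator topology: for the strong one because $\|T_sf-T_{s'}f\|_\infty=\|f_s-f_{s'}\|_\infty\to0$ straight from the definition of $C_{norm}(\ssS)$, and for the weak one because $f\in C_{weak}(\ssS)$ makes $s\mapsto f_s$ weakly continuous in $C_r(\ssS)$, hence in its closed subspace $X_0$ (the weak topology of a closed subspace being induced from that of the ambient space). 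Thus $M(\Phi_0)=1$ and (b) applies. The constant function $\mathbf1$ lies in $\NnN(\Phi_0)$, so the third bullet, applied to the induced projection $P_0\dd\DdD(\Phi_0)\to\NnN(\Phi_0)$, yields $1=\|\mathbf1\|_\infty=\|P_0(\mathbf1+r)\|\leqsl\|\mathbf1+r\|_\infty$ for every $r\in\RrR(\Phi_0)$, i.e.\ $\dist(\mathbf1,\RrR(\Phi_0))=1$ (the reverse inequality being trivial). By Hahn--Banach there is $\psi\in X_0^*$ with $\|\psi\|=\psi(\mathbf1)=1$ vanishing on $\RrR(\Phi_0)$; being of norm $1$ with $\psi(\mathbf1)=1$, $\psi$ is a mean, and vanishing at every $f_s-f$ it is right-invariant. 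Hence $\ssS$ is weakly [resp.\ strongly] right amenable.

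The only genuinely delicate point is the membership of $f_{x,\xi}$ in $C_{norm}(\ssS)$ resp.\ $C_{weak}(\ssS)$ (and, on the other side, that each $T_s$ preserves $C_{weak}(\ssS)$): in the variant with the weak operator topology one must make sure that weak continuity of $s\mapsto T_sx$ is not destroyed by composition with the bounded operator $z\mapsto f_{z,\xi}$ --- which it is not, precisely because bounded operators are weak-to-weak continuous. Once that bookkeeping is done, the behaviour of $\ell_{(\cdot)}$ under the semigroup, the Cauchy estimate for closedness of $\DdD(\Phi)$, and the distance-to-$\RrR(\Phi_0)$ reformulation of amenability are all elementary.
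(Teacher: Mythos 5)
Your proof is correct, and while your (b)\(\Rightarrow\)(a) direction coincides with the paper's (translation representation on \(C_{norm}(\ssS)\) resp.\ \(C_{weak}(\ssS)\), \(\dist(\mathbf 1,\RrR(\Phi_0))=1\), Hahn--Banach), your (a)\(\Rightarrow\)(b) direction takes a genuinely different route. The paper does not touch the invariant mean at all in that direction: it fixes a nonzero \(y\in\NnN(\Phi)\), applies \THM{fin} to the one-dimensional subspace \(Y=\lin\{y\}\) (where \(\lambda(X,Y)=1\) and \(m_Y(\Phi)=M(\Phi)\)) to obtain a \(\Phi\)-minimal projection \(P_0\dd X\to Y\) with \(\|P_0\|\leqsl M(\Phi)\) commuting with all \(T_s\), observes \(\RrR(\Phi)\subset\ker P_0\), and reads off all three bullets from \(\|y\|=\|P_0(y+z)\|\leqsl M(\Phi)\|y+z\|\). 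You instead integrate the orbit against the mean, producing \(\ell_x\in X^{**}\) with \(\ell_n=n\) on \(\NnN(\Phi)\) and \(\ell_r=0\) on \(\RrR(\Phi)\), and correctly note that \(\ell\) only lands back in \(X\) on \(\DdD(\Phi)\). Each approach has its merits: the paper's recycles the fixed-point machinery already built (and produces a genuine minimal commuting projection onto each line in \(\NnN(\Phi)\)), whereas yours is self-contained, closer to the classical mean-ergodic argument, and isolates exactly where the two continuity regimes enter --- namely in the verification that \(f_{x,\xi}\in C_{norm}(\ssS)\) resp.\ \(C_{weak}(\ssS)\), which you carry out correctly via weak-to-weak continuity of the bounded operator \(z\mapsto f_{z,\xi}\). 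Your pairing of the cases (mean on \(C_{norm}(\ssS)\) \(\leftrightarrow\) strong operator continuity \(\leftrightarrow\) weak amenability) is internally consistent throughout, even though you list the two alternatives in the opposite order to the statement; no mathematical gap results.
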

\begin{proof}
Assume (b) holds, put \(X = C_{norm}(\ssS)\) (resp.\ \(X = C_{weak}(\ssS)\)) and
define \(\Phi\dd \ssS \ni s \mapsto T_s \in \llL(X)\) by \(T_s f = f_s\). Note
that \(\Phi\) is a homomorphism continuous in the weak (resp.\ strong) operator
topology such that \(M(\Phi) = 1\). So, it follows from (b) that the projection
\(P\dd \DdD(\Phi) \to \NnN(\Phi)\) specified therein has norm not exceeding
\(1\). Observe that the function \(j\dd \ssS \to \RRR\) constantly equal to
\(1\) belongs to \(\NnN(\Phi)\) and thus \(1 = \|j\| = \|P(j+z)\| \leqsl
\|j+z\|\) for any \(z \in \RrR(\Phi)\). Equivalently, \(\dist(j,\RrR(\Phi)) =
1\). Now the Hahn-Banach theorem implies that there is a linear functional
\(\phi\dd X \to \RRR\) that vanishes on \(\RrR(\Phi)\) and satisfies \(\phi(j) =
1 = \|\phi\|\). This means that \(\phi\) is an invariant mean of \(X\) and we
are done.\par
Now assume (a) holds and let \(X\) and \(\Phi\dd \ssS \ni s \mapsto T_s \in
\llL(X)\) be as specified in (b). Fix a non-zero vector \(y \in \NnN(\Phi)\) and
consider the linear subspace \(Y\) generated by \(y\). Since \(Y\) is
one-dimensional, \(\lambda(X,Y) = 1\). Observe that conditions (fp0) and (fp1)
hold, and \(m_Y(\Phi) = M(\Phi)\). So, \THM{fin} gives us a \(\Phi\)-minimal
projection \(P_0\dd X \to Y\) with \(\|P_0\| \leqsl M(\Phi)\). For any \(s \in
\ssS\) and \(x \in X\) we have \[P_0(T_s x - x) = T_s(P_0 x) - P_0 x = 0,\]
because \(P_0 x \in Y \subset \NnN(\Phi)\). So, \(T_s x - x \in \ker(P_0)\) and
consequently \(\RrR(\Phi) \subset \ker(P_0)\). We conclude that for any \(z \in
\RrR(\Phi)\), \(\|y\| = \|P_0(y+z)\| \leqsl M(\Phi) \|y + z\|\). It follows from
the arbitrariness of \(y\) and \(z\) that \(\NnN(\Phi) \cap \RrR(\Phi) = \{0\}\)
and \(\|P\| \leqsl M(\Phi)\) (where \(P\) is as specified in (b)).
The closedness of \(\NnN(\Phi) + \RrR(\Phi)\) follows from the continuity of
\(P\) and the closedness of both \(\NnN(\Phi)\) and \(\RrR(\Phi)\).
\end{proof}

Recall that a Banach space \(X\) is HI (\textup{hereditary indecomposable}) if
for every projection \(P\dd Y \to Z\) where \(Z \subset Y \subset X\) the kernel
or the range of \(P\) is finite-dimensional.\par
As an immediate consequence of \COR{proj} (see item (B) in \EXS{amen}) we obtain

\begin{cor}{HI}
Let \(\ttT \subset \llL(E)\) be a bounded abelian multiplicative semigroup of
operators on a HI Banach space \(E\). Then either \(\NnN(\ttT)\) or
\(\RrR(\ttT)\) is finite-dimensional.
\end{cor}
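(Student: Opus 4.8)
The plan is to obtain \COR{HI} directly from \COR{proj}, as the sentence preceding the statement indicates. First I would equip $\ttT$ with the discrete topology and regard the inclusion map $\ttT \hookrightarrow \llL(E)$ as a homomorphism $\Phi$ of an SGT into $\llL(E)$ (it is a homomorphism precisely because $\ttT$ is a multiplicative semigroup); under the paper's conventions $\NnN(\Phi)$ and $\RrR(\Phi)$ are then exactly $\NnN(\ttT)$ and $\RrR(\ttT)$. Since $\ttT$ is abelian, item~(B) of \EXS[1]{amen} guarantees that $\ttT$, with the discrete topology, is strongly right amenable. With that topology $\Phi$ is trivially continuous in the weak operator topology, and the hypothesis that $\ttT$ is bounded says exactly that $M(\Phi) = \sup_{T\in\ttT}\|T\| < \infty$. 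Hence condition~(b) of \COR{proj} applies.

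Feeding this into \COR{proj} yields at once that $\NnN(\ttT)\cap\RrR(\ttT) = \{0\}$, that $\DdD(\ttT) = \NnN(\ttT)+\RrR(\ttT)$ is a closed linear subspace of $E$, and that the projection $P\dd \DdD(\ttT)\to\NnN(\ttT)$ induced by this topological direct-sum decomposition is bounded. By construction the range of $P$ equals $\NnN(\ttT)$ and its kernel equals $\RrR(\ttT)$; note that $\RrR(\ttT)$, being defined as a closed linear span, is a closed linear subspace of $E$, and $\NnN(\ttT)$ is closed as well.

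Finally I would invoke the hypothesis that $E$ is HI. We now have a bounded projection $P$ between closed linear subspaces $\NnN(\ttT)\subset\DdD(\ttT)\subset E$, so the defining property of an HI space forces either $\ker P = \RrR(\ttT)$ or the range $\NnN(\ttT)$ of $P$ to be finite-dimensional---which is exactly the conclusion sought.

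I do not anticipate a genuine obstacle: the argument is essentially a substitution into \COR{proj}. The only two points worth a word are (i) the legitimacy of placing the discrete topology on $\ttT$, which is supplied by \EXS[1]{amen}(B), and (ii) the remark that $\RrR(\ttT)$ is a closed subspace, so that it may play the role of the kernel in the HI alternative. Beyond that one simply has to keep the notational conventions ($\NnN(\Phi)$ versus $\NnN(\ttT)$, etc.) straight, which the paper's definitions make automatic.
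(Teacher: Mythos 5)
Your argument is correct and is exactly the route the paper intends: the paper presents \COR{HI} as an immediate consequence of \COR{proj} via item (B) of \EXS{amen}, and your write-up simply makes explicit the substitution (discrete topology on \(\ttT\), strong right amenability from commutativity, trivial continuity of the inclusion homomorphism, boundedness giving \(M(\Phi)<\infty\)) followed by the application of the HI property to the projection \(P\dd\DdD(\ttT)\to\NnN(\ttT)\) with kernel \(\RrR(\ttT)\). No gaps.
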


Now we offer a result on extending intertwining operators that reads as follows.

\begin{thm}{inter}
Let \(\Phi\dd \ssS \ni s \mapsto A_s \in \llL(X)\) and \(\Psi\dd \ssS \ni s
\mapsto B_s \in \llL(Y)\) be two homomorphisms of a strongly \textup{[}resp.\
weakly\textup{]} right amenable SGT \(\ssS\) into \(\llL(X)\) and \(\llL(Y)\)
where \(X\) and \(Y\) are Banach spaces. Suppose \(T_0\dd E \to Y\) is a bounded
linear operator from a closed linear subspace \(E\) of \(X\) into \(Y\) and:
\begin{enumerate}[\upshape({i}1)]\addtocounter{enumi}{-1}
\item \(\Phi\) and \(\Psi\) are continuous in the strong operator topologies
 \textup{[}resp.\ operator norm topologies\textup{]} of \(\llL(X)\) and
 \(\llL(Y)\), respectively;
\item for any \(s \in \ssS\), \(A_s(E) \subset E\) and \(B_s\) is
 an isomorphism;
\item \(Y\) is a dual Banach space;
\item for any \(s \in \ssS\), \(B_s\) is weak* continuous on the closed unit
 ball \(B_Y\) of \(Y\);
\item the function \(\ssS \ni s \mapsto \|B_s^{-1}\| \in \RRR\) is locally
 bounded;
\item \(B_s T_0 = T_0 A_s\restriction{E}\) for any \(s \in \ssS\).
\end{enumerate}
Then \tfcae
\begin{enumerate}[\upshape(i)]
\item \(T_0\) extends to a bounded linear operator \(T\dd X \to Y\) that
 intertwines \(\Phi\) and \(\Psi\), that is, \(B_s T = T A_s\) for all \(s \in
 \ssS\);
\item there exists a bounded linear operator \(T'\dd X \to Y\) that extends
 \(T_0\) and satisfies \(\sup_{s \in \ssS} \|B_s^{-1} T' A_s\| < \infty\).
\end{enumerate}
\end{thm}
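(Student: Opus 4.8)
The plan is to imitate the proof of \PRO{proj}, replacing ``projections onto $Y$'' by ``bounded extensions of $T_0$'' and then invoking the fixed point property \THM{fpp}. The implication (i)$\Rightarrow$(ii) is trivial: if $T$ extends $T_0$ and $B_sT = TA_s$ for all $s$, then $B_s^{-1}TA_s = T$, so $T'\df T$ satisfies (ii). Everything below concerns (ii)$\Rightarrow$(i).

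First I would record, exactly as (p3') was obtained in \PRO{proj}, that (i2) and (i3) together with the Kre\u{\i}n-Smulian theorem make every $B_s$ weak* continuous on all of $Y$ and every $B_s^{-1}$ weak* continuous as well. For a bounded linear $S\dd X\to Y$ that extends $T_0$ and $s\in\ssS$ I set $S.s\df B_s^{-1}SA_s$. Using (i1) (so that $A_s(E)\subset E$), the fact that $S$ extends $T_0$, and the intertwining identity (i5), one checks that $(S.s)\restriction E = T_0$, so $S.s$ is again a bounded extension of $T_0$; and $(S.s).t = S.(st)$ because $\Phi,\Psi$ are homomorphisms. Thus $(S,s)\mapsto S.s$ is an affine right action of $\ssS$ on the convex set of all bounded extensions of $T_0$.

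Next, fixing $T'$ as in (ii), I let $M<\infty$ be a common bound for $\|T'\|$ and $\sup_{s\in\ssS}\|B_s^{-1}T'A_s\|$, and I let $\Delta$ be the set of bounded linear $S\dd X\to Y$ extending $T_0$ with $\|S\|\leqsl M$ and $\sup_{s\in\ssS}\|S.s\|\leqsl M$, equipped with the topology of pointwise weak* convergence. As in \PRO{proj}, $\Delta$ is convex, contains $T'$, is invariant under the action, and is compact: balls in $Y$ are weak* compact, ``$S$ linear'' and ``$Se=T_0e\ (e\in E)$'' are closed conditions, and---using weak* continuity of $B_s^{-1}$ and weak* lower semicontinuity of the norm---so is each constraint $\|B_s^{-1}SA_sx\|\leqsl M\|x\|$. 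By \THM{fpp} it then remains to check that the action is separately [resp.\ jointly] continuous on $\Delta\times\ssS$; a fixed point $T\in\Delta$ satisfies $B_s^{-1}TA_s = T$, i.e.\ $TA_s = B_sT$, and extends $T_0$, which is (i).

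Continuity in $S$ (for fixed $s$) is immediate from weak* continuity of $B_s^{-1}$. For the $\ssS$-variable, if $s_\sigma\to s$ then (i0) gives $A_{s_\sigma}\to A_s$ and $B_{s_\sigma}\to B_s$ (strongly, resp.\ in operator norm), (i4) makes $\|B_{s_\sigma}^{-1}\|$ eventually bounded, and writing $B_{s_\sigma}^{-1}-B_s^{-1} = B_{s_\sigma}^{-1}(B_s-B_{s_\sigma})B_s^{-1}$ forces $B_{s_\sigma}^{-1}\to B_s^{-1}$ in the same sense. The single nonroutine step is then the limit of $B_{s_\sigma}^{-1}S_\sigma A_{s_\sigma}x$: a three-term splitting isolates the contribution of $A_{s_\sigma}$ versus $A_s$ (controlled in norm via $\|S_\sigma\|\leqsl M$ and boundedness of $\|B_{s_\sigma}^{-1}\|$), of $B_{s_\sigma}^{-1}$ versus $B_s^{-1}$ (controlled in norm in the weakly amenable case, or evaluated at the fixed vector $SA_sx$ in the strongly amenable case), and of $S_\sigma$ versus $S$ (controlled weak* via weak* continuity of $B_s^{-1}$). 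Taking $S_\sigma\equiv S$ yields separate continuity in $s$, which suffices for the strongly amenable version; the full computation yields joint continuity, needed for the weakly amenable version. I expect this last convergence argument---entirely parallel to the corresponding passage in the proof of \PRO{proj}---to be the only real obstacle; the rest is formal bookkeeping.
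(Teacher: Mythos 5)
Your proposal is correct and follows essentially the same route as the paper: the paper's own (sketched) proof defines the same action $L.s = B_s^{-1}LA_s$ on the set $\Delta$ of bounded extensions of $T_0$ with uniformly bounded orbit, equips it with the weak* operator topology, and invokes Theorem~3.2 after repeating the continuity/compactness arguments from Proposition~4.2. Your additional verifications (that $(S.s)\restriction E = T_0$ via (i1) and (i5), and the three-term splitting for joint continuity) are exactly the details the paper leaves to the reader.
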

\begin{proof}
The proof is similar to that of \PRO{proj} and thus we only give its sketch. For
any bounded linear operator \(L\dd X \to Y\) and \(s \in \ssS\) we denote by
\(L.s\) the operator \(B_s^{-1} L A_s\). This defines an affine right action of
\(\ssS\). Next, assume \(T'\) is as specified in (ii) and set \(M \df
\sup_{s\in\ssS} \|T'.s\|\). Equip the set \(\Delta\) of all linear extensions
\(T\dd X \to Y\) of \(T_0\) such that \(\|T\| \leqsl M\) and \(\sup_{s\in\ssS}
\|T.s\| \leqsl M\) with the weak* operator topology. Now it suffices to repeat
the arguments presented in the proof of \PRO{proj} to conclude that the action
defined above is suitably continuous on \(\Delta \times \ssS\) and thus has
a fixed point \(T\) in \(\Delta\). Then automatically \(B_s T = T A_s\) for any
\(s \in \ssS\) and we are done.
\end{proof}

The following result is a special case of \THM{inter} and we skip its proof.

\begin{cor}{bdd}
Suppose all assumptions of \THM{inter} \textup{(}on \(\Phi\), \(\Psi\), \(Y\)
and \(T_0\)\textup{)} hold. If, in addition, \(\sup_{s\in\ssS} (\|B_s^{-1}\|
\cdot \|A_s\|) < \infty\), then \(T_0\) extends to a bounded linear operator
\(T\dd X \to Y\) intertwining \(\Phi\) and \(\Psi\) iff \(T_0\) extends to
a bounded linear operator from \(X\) into \(Y\).
\end{cor}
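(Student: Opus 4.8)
The plan is to imitate the way \THM{bdd} is deduced from \PRO{proj}: the extra norm bound should let us verify condition (ii) of \THM{inter} for \emph{any} bounded linear extension of \(T_0\), after which \THM{inter} does the rest.

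The implication from ``\(T_0\) extends to an intertwining operator'' to ``\(T_0\) extends to a bounded linear operator'' is trivial, since an intertwining extension is in particular a bounded linear extension. For the converse, suppose \(T'\dd X \to Y\) is a bounded linear extension of \(T_0\), and put \(C \df \sup_{s\in\ssS}(\|B_s^{-1}\| \cdot \|A_s\|)\), which is finite by hypothesis. Then for each \(s \in \ssS\),
\[
\|B_s^{-1} T' A_s\| \leqsl \|B_s^{-1}\| \cdot \|T'\| \cdot \|A_s\| \leqsl C \|T'\|,
\]
so \(\sup_{s\in\ssS} \|B_s^{-1} T' A_s\| \leqsl C \|T'\| < \infty\); that is, \(T'\) witnesses condition (ii) of \THM{inter}. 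Since all the remaining hypotheses of \THM{inter} on \(\Phi\), \(\Psi\), \(Y\) and \(T_0\)---in particular (i0)--(i4), including the local boundedness of \(s \mapsto \|B_s^{-1}\|\)---are inherited, \THM{inter} yields condition (i): \(T_0\) extends to a bounded linear operator \(T\dd X \to Y\) with \(B_s T = T A_s\) for all \(s \in \ssS\), which is exactly what is wanted.

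There is no genuine obstacle here. The only point deserving a remark is the role of hypothesis (i3) of \THM{inter}: it is already part of the inherited assumptions, so nothing extra needs to be checked; alternatively, exactly as in the proof of \THM{bdd}, one can observe that \(\sup_{s\in\ssS}(\|B_s^{-1}\| \cdot \|A_s\|) < \infty\) together with (i0) and (i1) already forces \(s \mapsto \|B_s^{-1}\|\) to be locally bounded (a net \(s_\sigma \to s\) with \(\|B_{s_\sigma}^{-1}\| \to \infty\) would force \(\|A_{s_\sigma}\| \to 0\), hence \(A_s = 0\) and, since \(B_s\) is invertible, \(T_0 = 0\) by (i4), a degenerate case in which the statement holds trivially via the zero extension).
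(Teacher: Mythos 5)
Your proof is correct and is exactly the argument the paper has in mind: the paper skips the proof of this corollary precisely because, under the extra bound \(\sup_{s\in\ssS}(\|B_s^{-1}\|\cdot\|A_s\|)<\infty\), any bounded linear extension \(T'\) of \(T_0\) automatically satisfies condition (ii) of \THM{inter}, which is the one-line verification you supply. The only blemish is an off-by-one slip in your closing remark: the local boundedness of \(s\mapsto\|B_s^{-1}\|\) is hypothesis (i4), not (i3), and the intertwining relation you invoke is (i5), not (i4) --- but since the corollary assumes all hypotheses of \THM{inter} outright, that remark is dispensable anyway.
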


\begin{exm}{counter}
As the following simple example shows, the assumption in \THM{bdd} (and hence
also in \PRO{proj}) that \(T_s(Y) = Y\) for all \(s \in \ssS\) (see (p1)) cannot
be dropped in general. Let \(X\) be a separable Hilbert space with
an orthonormal basis \(e_0,e_1,e_2,\ldots\) and \(S\dd X \to X\) be
the shift---that is, \(S e_n = e_{n+1}\) (\(n \geqsl 0\)). Further, let \(\ssS\)
be the additive group of all natural numbers, \(\Phi\dd \ssS \ni n \mapsto S^n
\in \llL(X)\) and \(Y\) be the range of \(S\). Then all assumptions
of \THM{bdd}, except for ``\(S(Y) = Y\)'', are fulfilled and \(Y\) is
complemented. However, there is no projection onto \(Y\) that commutes with
\(S\).\par
The above example can also be employed to show that the assumption in \COR{bdd}
that all the operators \(B_s\) are isomorphisms (see (i1)) cannot be dropped in
general. Indeed, it suffices to set, in addition to the above settings, \(T_0\)
as the identity operator on \(E \df Y \subset X\), and \(\Psi = \Phi\).
\end{exm}

Further results of this section deal with renorming-like issues. The first
of them generalises results of Koehler and Rosenthal \cite{k-r} (see therein for
the definition of a semi-inner product on a Banach space).

\begin{thm}{renorm}
Let \(\ssS \ni s \mapsto T_s \in \llL(X)\) be a homomorphism of a weakly right
amenable SGT \(\ssS\) into \(\llL(X)\) \textup{(}where \((X,\|\cdot\|_X)\) is
a Banach space\textup{)} that is continuous in the strong operator topology of
\(\llL(X)\). Further, let \(m\) and \(M\) be two positive real constants such
that
\begin{equation}\label{eqn:aux4}
m \|x\|_X \leqsl \|T_s x\|_X \leqsl M \|x\|_X \qquad (x \in X,\ s \in \ssS).
\end{equation}
Then there exists a norm \(\|\cdot\|_*\) on \(X\) such that for any \(x \in X\)
and \(s \in \ssS\), \(m \|x\|_X \leqsl \|x\|_* \leqsl M \|x\|_X\) and \(\|T_s
x\|_* = \|x\|_*\).\par
If, in addition, the topology of \(\ssS\) is discrete, then there exists
a semi-inner product \([\cdot,-]_*\) inducing the norm \(\|\cdot\|_*\) such that
\([T_s x,T_s y]_* = [x,y]_*\) for all \(x, y \in X\) and \(s \in \ssS\).
\end{thm}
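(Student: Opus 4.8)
The plan is to obtain $\|\cdot\|_*$ by averaging the numbers $\|T_s x\|_X$ against a right invariant mean and, in the discrete case, to average a semi-inner product in the same way. Since $\ssS$ is weakly right amenable, I would fix a right invariant mean $\phi$ on $C_{norm}(\ssS)$; for $x \in X$ I would put $g^{(x)}(s) \df \|T_s x\|_X$ and $\|x\|_* \df \phi\bigl(g^{(x)}\bigr)$.

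The main obstacle --- indeed the only point where the hypotheses are really used --- is to check that $g^{(x)} \in C_{norm}(\ssS)$, so that $\phi$ may legitimately be applied to it. Boundedness of $g^{(x)}$ by $M\|x\|_X$ and its continuity are immediate from \eqref{eqn:aux4} and the continuity of $s \mapsto T_s x$ (the strong operator topology hypothesis); the same reasoning, applied with $T_s x$ in place of $x$, shows that each translate $\bigl(g^{(x)}\bigr)_s$, which is the function $t \mapsto \|T_t(T_s x)\|_X$, is continuous, so $g^{(x)} \in C_r(\ssS)$. For the norm-continuity of $s \mapsto \bigl(g^{(x)}\bigr)_s$ and of $s \mapsto \bigl(g^{(x)}\bigr)_{st}$ --- using that the latter equals $\bigl(g^{(T_t x)}\bigr)_s$ --- I would invoke the reverse triangle inequality together with the upper bound in \eqref{eqn:aux4} to obtain $\|\bigl(g^{(y)}\bigr)_{s'} - \bigl(g^{(y)}\bigr)_s\|_\infty \leqsl M\,\|T_{s'}y - T_s y\|_X$ for every $y \in X$, whose right-hand side tends to $0$ by strong continuity. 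This places $g^{(x)}$ in $C_{norm}(\ssS)$.

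Granting that, the verifications for the first assertion are routine. The bounds $m\|x\|_X \leqsl \|x\|_* \leqsl M\|x\|_X$ (hence positive definiteness, since $m > 0$) follow from the mean inequality $\inf g^{(x)}(\ssS) \leqsl \phi\bigl(g^{(x)}\bigr) \leqsl \sup g^{(x)}(\ssS)$ combined with \eqref{eqn:aux4}; absolute homogeneity follows from $g^{(\lambda x)} = |\lambda|\,g^{(x)}$ and linearity of $\phi$; the triangle inequality follows from the pointwise estimate $g^{(x+y)} \leqsl g^{(x)} + g^{(y)}$ and monotonicity of $\phi$; and the invariance $\|T_t x\|_* = \|x\|_*$ follows because $g^{(T_t x)} = \bigl(g^{(x)}\bigr)_t$ and $\phi$ is right invariant.

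For the discrete case I would note that then $C_{norm}(\ssS) = \ell_\infty(\ssS)$, so $\phi$ is a right invariant mean on all of it (item (C) in \EXS{amen}); if $X$ is complex I would extend $\phi$ to bounded complex-valued functions by $\phi(f) \df \phi(\operatorname{Re}f) + i\,\phi(\operatorname{Im}f)$, which stays right invariant and satisfies $|\phi(f)| \leqsl \sup_{\ssS}|f|$. Then I would take any semi-inner product $[\cdot,-]$ on $(X,\|\cdot\|_*)$ that induces $\|\cdot\|_*$ (such a one exists by \cite{k-r}) and set $[x,y]_* \df \phi\bigl(s \mapsto [T_s x, T_s y]\bigr)$; by the Cauchy--Schwarz inequality for $[\cdot,-]$ and the invariance of $\|\cdot\|_*$ just proved, the function $s \mapsto [T_s x, T_s y]$ is bounded in modulus by $\|x\|_*\|y\|_*$, hence lies in the domain of $\phi$. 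The semi-inner-product axioms for $[\cdot,-]_*$ then follow just as above: linearity in the first variable and conjugate homogeneity in the second from linearity of $\phi$; the identity $[x,x]_* = \|x\|_*^2$ from the fact that $s \mapsto [T_s x, T_s x] = \|T_s x\|_*^2 = \|x\|_*^2$ is constant (this also yields positive definiteness and shows that $[\cdot,-]_*$ induces $\|\cdot\|_*$); the Cauchy--Schwarz inequality $|[x,y]_*|^2 \leqsl [x,x]_*[y,y]_*$ from $|[x,y]_*| \leqsl \phi\bigl(s \mapsto |[T_s x, T_s y]|\bigr) \leqsl \|x\|_*\|y\|_*$; and the invariance $[T_t x, T_t y]_* = [x,y]_*$ because $s \mapsto [T_{st}x, T_{st}y]$ is the right $t$-translate of $s \mapsto [T_s x, T_s y]$ and $\phi$ is right invariant.
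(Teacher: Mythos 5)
Your proof is correct, but it takes the other side of the amenability coin than the paper does. The paper's proof never touches an invariant mean directly: it forms the set \(\Delta\) of all norms squeezed between \(m\|\cdot\|_X\) and \(M\|\cdot\|_X\) (and similarly the set \(\Gamma\) of semi-inner products inducing \(\|\cdot\|_*\)), observes that \((\|\cdot\|,s)\mapsto\|T_s\cdot\|\) is an affine right action on this pointwise-compact convex set, checks joint continuity of that action, and invokes the fixed-point property (SG2) of \THM{fpp}. You instead use the defining property (SG1) and average: \(\|x\|_*\df\phi(s\mapsto\|T_sx\|_X)\) for a right invariant mean \(\phi\) on \(C_{norm}(\ssS)\). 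The two routes are equivalent via \THM{fpp}, but the work lands in different places: you must verify that \(g^{(x)}\dd s\mapsto\|T_sx\|_X\) actually lies in \(C_{norm}(\ssS)\) --- which you do correctly, the key identity being \((g^{(x)})_s=g^{(T_sx)}\) together with the estimate \(\|(g^{(y)})_{s'}-(g^{(y)})_s\|_\infty\leqsl M\|T_{s'}y-T_sy\|_X\) coming from the reverse triangle inequality and the upper bound in \eqref{eqn:aux4} --- whereas the paper must verify joint continuity of its action on \(\Delta\times\ssS\). Your method has the advantage of producing an explicit averaging formula for \(\|\cdot\|_*\) (and, in the discrete case, for \([\cdot,-]_*\), where your complexification of \(\phi\) and the bound \(|[T_sx,T_sy]|\leqsl\|x\|_*\|y\|_*\) are handled properly); the paper's method is the one that scales to its other applications (projections, intertwiners), where the objects being averaged are operators rather than scalars and a pointwise mean would require more care. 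Both arguments use the hypotheses in exactly the same way, so this is a legitimate alternative proof rather than a gap.
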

\begin{proof}
For any seminorm \(\|\cdot\|\) on \(X\) and \(s \in \ssS\) we denote by
\(\|\cdot\|.s\) the seminorm on \(X\) that assigns to a vector \(x\) the number
\(\|T_s x\|\). It is easy to check that
\begin{equation}\label{eqn:aux6}
(\|\cdot\|,s) \mapsto \|\cdot\|.s
\end{equation}
is an affine right action of \(\ssS\) on the set of all seminorms on \(X\).\par
Firstly, let \(\Delta\) be the set of all norms \(\|\cdot\|\) on \(X\) such
that \(m \|x\|_X \leqsl \|x\| \leqsl M \|x\|_X\) and \(m \|x\|_X \leqsl
\|T_s x\| \leqsl M \|x\|_X\) for all \(x \in X\) and \(s \in \ssS\). We equip
\(\Delta\) with the pointwise convergence topology. Observe that \(\Delta\) is
convex and compact (by the Tychonoff theorem), and \(\|\cdot\|_X.s \in \Delta\)
(by \eqref{eqn:aux4}) and \(\|\cdot\|.s \in \Delta\) for all \(\|\cdot\| \in
\Delta\) and \(s \in \ssS\). One also easily verifies that the action
\eqref{eqn:aux6} is jointly continuous on \(\Delta \times \ssS\). Hence, by
\THM{fpp}, there is a norm \(\|\cdot\|_* \in \Delta\) which is a fixed point for
this action. This means that all \(T_s\) are isometries with respect to this
norm.\par
Secondly, let \(\Gamma\) consist of all semi-inner products on \(X\) that induce
the norm \(\|\cdot\|_*\). As before, \(\Gamma\) is convex, compact (in
the pointwise convergence topology) and non-empty. For \([\cdot,-] \in \Gamma\)
and \(s \in \ssS\), we define \([\cdot,-].s \in \Gamma\) by \([x,y].s \df
[T_s x,T_s y]\). As before, one easily checks that in this way we have defined
an affine right action of \(\ssS\) on \(\Gamma\). So, if the topology of
\(\ssS\) is discrete, the fixed point property formulated in \THM{fpp} finishes
the proof.
\end{proof}

\begin{cor}{enlarge}
Let \(\ssS \ni s \mapsto T_s \in \llL(X)\) be a homomorphism of an SGT \(\ssS\)
into \(\llL(X)\) \textup{(}where \((X,\|\cdot\|)\) is a Banach space\textup{)}
that is continuous in the strong operator topology of \(\llL(X)\). Further,
assume \(\ssS_0\) is a weakly right amenable subsemigroup of \(\ssS\)
\textup{(}in the topology inherited from \(\ssS\)\textup{)} such that
\begin{itemize}
\item[\((\star)\)] \(\ssS\) is a unique subsemigroup \(\zzZ\) of \(\ssS\) that
 is closed, contains \(\ssS_0\) and has the following property: if \(x, y \in
 \ssS\) and \(x, xy \in \zzZ\), then \(y \in \zzZ\).
\end{itemize}
If \(m\) and \(M\) are two positive real constants such that
\begin{equation}\label{eqn:aux12}
m \|u\| \leqsl \|T_s u\| \leqsl M \|u\| \qquad (u \in X,\ s \in \ssS_0),
\end{equation}
then for any \(s \in \ssS\) and \(u \in X\),
\[\frac{m}{M} \|u\| \leqsl \|T_s u\| \leqsl \frac{M}{m} \|u\|.\]
\end{cor}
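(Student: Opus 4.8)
The plan is to apply \THM{renorm} to the subsemigroup \(\ssS_0\), obtain a norm \(\|\cdot\|_*\) on \(X\) that is equivalent to \(\|\cdot\|\) and invariant under all \(T_s\) with \(s \in \ssS_0\), and then argue that the set of indices \(s\) for which \(T_s\) is an \emph{isometry} with respect to \(\|\cdot\|_*\) forms a closed subsemigroup of \(\ssS\) enjoying the cancellation-type property in \((\star)\); by the uniqueness hypothesis this set must be all of \(\ssS\). A short computation then transfers the two-sided estimate from \(\|\cdot\|_*\) back to \(\|\cdot\|\), losing a factor \(M/m\) on each side.

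\textbf{Step 1 (renorming).} Since \(\ssS_0\) is a weakly right amenable SGT in the inherited topology and the restricted homomorphism \(\ssS_0 \ni s \mapsto T_s\) is still continuous in the strong operator topology and satisfies \eqref{eqn:aux12}, \THM{renorm} provides a norm \(\|\cdot\|_*\) on \(X\) with
\begin{equation}\label{eqn:ne1}
m \|u\| \leqsl \|u\|_* \leqsl M \|u\| \qquad (u \in X)
\end{equation}
and \(\|T_s u\|_* = \|u\|_*\) for all \(u \in X\) and \(s \in \ssS_0\).

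\textbf{Step 2 (the isometry set).} Put \(\zzZ \df \{s \in \ssS\dd\ \|T_s u\|_* = \|u\|_*\text{ for all }u \in X\}\). Clearly \(\ssS_0 \subseteq \zzZ\). The set \(\zzZ\) is a subsemigroup: if \(\|T_s u\|_* = \|u\|_*\) and \(\|T_t u\|_* = \|u\|_*\) for all \(u\), then \(\|T_{st} u\|_* = \|T_s(T_t u)\|_* = \|T_t u\|_* = \|u\|_*\). It is closed in \(\ssS\): if \(s_\sigma \to s\) with each \(s_\sigma \in \zzZ\), then by strong-operator continuity \(T_{s_\sigma} u \to T_s u\) in \(\|\cdot\|\), hence (using \eqref{eqn:ne1}) in \(\|\cdot\|_*\); since \(\|T_{s_\sigma} u\|_* = \|u\|_*\) for every \(\sigma\), continuity of the norm gives \(\|T_s u\|_* = \|u\|_*\). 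Finally \(\zzZ\) has the property displayed in \((\star)\): if \(x, xy \in \zzZ\), then for every \(u\) we have \(\|T_y u\|_* = \|T_{xy}(T_x^{-1}\text{-type argument})\|_*\)---more precisely, \(\|T_x(T_y u)\|_* = \|T_{xy} u\|_* = \|u\|_*\) and also \(\|T_x(T_y u)\|_* = \|T_y u\|_*\) since \(x \in \zzZ\), whence \(\|T_y u\|_* = \|u\|_*\), i.e.\ \(y \in \zzZ\). By the uniqueness assumption \((\star)\), \(\zzZ = \ssS\).

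\textbf{Step 3 (transferring the estimate).} For any \(s \in \ssS\) and \(u \in X\), \(\|T_s u\|_* = \|u\|_*\); combining with \eqref{eqn:ne1} yields \(m \|T_s u\| \leqsl \|T_s u\|_* = \|u\|_* \leqsl M \|u\|\), so \(\|T_s u\| \leqsl \tfrac{M}{m} \|u\|\); and \(M \|T_s u\| \geqsl \|T_s u\|_* = \|u\|_* \geqsl m \|u\|\), so \(\|T_s u\| \geqsl \tfrac{m}{M} \|u\|\). This is exactly the asserted inequality.

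\textbf{Main obstacle.} Everything above is routine once the right invariant object is found; the only genuinely delicate point is verifying the cancellation property in \((\star)\) for \(\zzZ\), and in particular realizing that no invertibility of \(T_x\) is needed---the equality \(\|T_x(T_y u)\|_* = \|T_y u\|_*\) holds because \(x\) already lies in \(\zzZ\), so the "cancellation" is of norms, not of operators. One should double-check that this is all that \((\star)\) demands: \((\star)\) asserts \(\ssS\) is the \emph{unique} closed subsemigroup containing \(\ssS_0\) with that implication property, so it suffices to exhibit \emph{one} such subsemigroup, namely \(\zzZ\), and conclude \(\zzZ = \ssS\).
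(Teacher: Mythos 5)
Your proposal is correct and follows essentially the same route as the paper's own proof: renorm via \THM{renorm} applied to \(\ssS_0\), let \(\zzZ\) be the set of \(\|\cdot\|_*\)-isometries, invoke \((\star)\) to get \(\zzZ=\ssS\), and transfer the estimate. The paper leaves the verification that \(\zzZ\) is a closed subsemigroup with the cancellation property implicit, whereas you spell it out (correctly); otherwise the arguments coincide.
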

\begin{proof}
It follows from \THM{renorm} that there exists a norm \(\|\cdot\|_*\) on \(X\)
such that \(m \|u\| \leqsl \|u\|_* \leqsl M \|u\|\) and \(\|T_s u\|_* =
\|u\|_*\) (that is, \(T_s\) is isometric with respect to \(\|\cdot\|_*\)) for
any \(s \in \ssS_0\) and \(u \in X\). Now let \(\zzZ\) consist of all \(s \in
\ssS\) such that \(T_s\) is isometric with respect to \(\|\cdot\|_*\). We
conclude from \((\star)\) that \(\zzZ = \ssS\). But then, for any \(s \in \ssS\)
and \(u \in X\), \(\|T_s u\| \leqsl \frac1m \|T_s u\|_* = \frac1m \|u\|_* \leqsl
\frac{M}{m} \|u\|\) and similarly \(\|T_s u\| \geqsl \frac1M \|T_s u\|_* =
\frac1M \|u\|_* \geqsl \frac{m}{M} \|u\|\).
\end{proof}

\begin{rem}{alg}
The above result gives us some insight into the algebraic structure of a right
amenable SGT and tells us how far it is from being free. If, for example,
\(\ssS_0\) is a weakly right amenable SGT that generates a topological group
\(\ggG\) and \(\ggG \ni g \mapsto T_g \in \llL(X)\) is a unital homomorphism
that is continuous in the strong operator topology and satisfies
\eqref{eqn:aux12} (see \COR{enlarge}), then the operators of the form
\(T_{s_1}^{\epsi_1} \cdot \ldots \cdot T_{s_k}^{\epsi_k}\) (where \(s_1,\ldots,
s_k \in \ssS_0\), \(\epsi_1,\ldots,\epsi_k \in \{-1,1\}\) and \(k > 0\) are
arbitrary) are uniformly bounded.
\end{rem}

We end this section with the next two generalisations of the Sz.-Nagy theorem
\cite{SzN}. Below we use \(I\) to denote the identity operator on a Hilbert
space \(H\).

\begin{pro}{gen}
Let \(\ssS \ni s \mapsto T_s \in \llL(H)\) be a homomorphism of a weakly right
amenable SGT \(\ssS\) into \(\llL(H)\) \textup{(}where \((H,\scalarr)\) is
a Hilbert space\textup{)} that is continuous in the strong operator topology of
\(\llL(H)\). Further, suppose there exist an operator \(A_0 \in \llL(H)\) and
two functions \(m, M\dd H \to (0,\infty)\) such that for any \(x \in H\),
\begin{equation}\label{eqn:aux5}
m(x) \|x\| \leqsl \|A_0 T_s x\| \leqsl M(x) \|x\| \qquad (s \in \ssS).
\end{equation}
Then there exist a positive operator \(A \in \llL(H)\) with trivial kernel and
a homomorphism \(\ssS \ni s \mapsto V_s \in \llL(H)\) continuous in the strong
operator topology such that
\begin{itemize}
\item \(m(x) \|x\| \leqsl \|A x\| \leqsl M(x) \|x\|\) for any \(x \in H\);
\item \(A T_s = V_s A\) for all \(s \in \ssS\);
\item \(V_s\) is an isometry for any \(s \in \ssS\).
\end{itemize}
\end{pro}
\begin{proof}
It follows from \eqref{eqn:aux5} and the Uniform Boundedness Principle that
\[\sup_{s\in\ssS} \|A_0 T_s\| < \infty.\] Consequently, we may and do assume
that
\begin{equation}\label{eqn:Mbdd}
\sup_{x \in H} M(x) < \infty.
\end{equation}
For any \(B \in \llL(H)\) and \(s \in \ssS\) let \(B.s\) stand for the operator
\(T_s^* B T_s\). It is easily seen that \((B,s) \mapsto B.s\) is an affine right
action of \(\ssS\) on \(\llL(H)\).\par
Denote by \(\Delta\) the set of all positive operators \(B \in \llL(H)\) such
that
\begin{equation}\label{eqn:aux19}
m(x)^2 \|x\|^2 \leqsl \scalar{Bx}{x} \leqsl M(x)^2 \|x\|^2
\end{equation}
as well as
\begin{equation}\label{eqn:aux9}
m(x)^2 \|x\|^2 \leqsl \scalar{(B.s)x}{x} \leqsl M(x)^2 \|x\|^2
\end{equation}
for any \(x \in H\) and \(s \in \ssS\), and equip \(\Delta\) with the weak
operator topology. It is clear that \(\Delta\) is convex and compact, and that
\((A_0^* A_0).s \in \Delta\) (by \eqref{eqn:aux5}) and \(B.s \in \Delta\) for
any \(B \in \Delta\) and \(s \in \ssS\). We claim that the action just defined
is jointly continuous on \(\Delta \times \ssS\). To be convinced of that, assume
\((B_{\sigma})_{\sigma\in\Sigma} \subset \Delta\) and
\((s_{\sigma})_{\sigma\in\Sigma} \subset \ssS\) are two nets that converge to,
respectively, \(B \in \Delta\) and \(s \in \ssS\). Then \(T_{s_{\sigma}}\)
converge in the strong operator topology to \(T_s\). Since the operators
\(B_{\sigma}\) are uniformly bounded (by \eqref{eqn:Mbdd}, \eqref{eqn:aux19} and
\eqref{eqn:aux9})), we conclude that \(B_{\sigma} T_{s_{\sigma}}\) converge to
\(B T_s\) in the weak operator topology. So, when \(x, y \in H\) are fixed,
the net \((T_{s_{\sigma}} x)_{\sigma\in\Sigma}\) is eventually bounded and thus
\[\lim_{\sigma\in\Sigma} \scalar{T_{s_{\sigma}}^* B_{\sigma}
T_{s_{\sigma}} x}{y} = \lim_{\sigma\in\Sigma} \scalar{B_{\sigma} T_{s_{\sigma}}
x}{T_{s_{\sigma}} y} = \scalar{B T_s x}{T_s y} = \scalar{T_s^* B T_s x}{y}\]
(because the vectors \(B_{\sigma} T_{s_{\sigma}} x\) are uniformly bounded and
converge weakly to \(B T_s x\), and \(T_{s_{\sigma}} y\) converge in the norm
to \(T_s y\)).\par
Now the fixed point property from \THM{fpp} gives us an operator \(B \in
\Delta\) such that
\begin{equation}\label{eqn:aux11}
T_s^* B T_s = B \qquad (s \in \ssS).
\end{equation}
We define \(A\) as the (positive) square root of \(B\). For any \(x \in H\),
\(\|A x\|^2 = \scalar{Bx}{x}\) and thus \(m(x) \|x\| \leqsl \|A x\| \leqsl M(x)
\|x\|\) (thanks to \eqref{eqn:aux19}), which in turn implies that the kernel of
\(A\) is trivial.\par
Fix for a moment \(s \in \ssS\) and let \(V_s\) be the partial isometry that
appears in the polar decomposition \(A T_s = V_s |A T_s|\) (where \(|A T_s| =
\sqrt{(A T_s)^* (A T_s)}\)) of the operator \(A T_s\). The equation
\eqref{eqn:aux11} gives \((A T_s)^* (A T_s) = A^2\). So, by the uniqueness of
the positive square root we obtain \(|A T_s| = A\) and consequently \(A T_s =
V_s A\). Since \(A\) has trivial kernel, \(V_s\) is an isometry.\par
It remains to check that \(s \mapsto V_s\) is a homomorphism continuous in
the strong operator topology. For \(s, t \in \ssS\) we have \(V_{st} A = A
T_{st} = A T_s T_t = V_s A T_t = V_s V_t A\). Since the range of \(A\) is dense
in \(H\), we get \(V_{st} = V_s V_t\). Finally, if \(s_{\sigma} \in \ssS\)
converge to \(s \in \ssS\), then for any \(x \in H\), \(\lim_{\sigma\in\Sigma}
V_{s_{\sigma}}(Ax) = \lim_{\sigma\in\Sigma} A(T_{s_\sigma} x) = A T_s x =
V_s(Ax)\) and, again, the density of the range of \(A\) in \(H\) implies that
\(V_{s_{\sigma}}\) converge to \(V_s\) in the strong operator topology.
\end{proof}

As an immediate consequence of the above result, we obtain

\begin{cor}{iso}
Let \(\ssS \ni s \mapsto T_s \in \llL(H)\) be a homomorphism of a weakly right
amenable SGT \(\ssS\) into \(\llL(H)\) \textup{(}where \(H\) is a Hilbert
space\textup{)} that is continuous in the strong operator topology of
\(\llL(H)\). There exists an invertible positive operator \(A \in \llL(H)\) such
that \(A T_s A^{-1}\) is an isometry for each \(s \in \ssS\) iff there are two
positive real constants \(m\) and \(M\) such that
\begin{equation}\label{eqn:aux10}
m \|x\| \leqsl \|T_s x\| \leqsl M \|x\| \qquad (s \in \ssS,\ x \in H).
\end{equation}
Moreover, if \eqref{eqn:aux10} holds, the above operator \(A\) can be chosen so
that \(m I \leqsl A \leqsl M I\).
\end{cor}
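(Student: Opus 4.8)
The plan is to deduce \COR{iso} directly from \PRO{gen}. First I would dispose of the easy implication: if there is an invertible positive operator $A$ with $A T_s A^{-1}$ an isometry for every $s$, then for $x\in H$ one has $\|T_s x\| = \|A^{-1}(A T_s A^{-1})(Ax)\| \leqsl \|A^{-1}\|\,\|Ax\| \leqsl \|A^{-1}\|\,\|A\|\,\|x\|$, and symmetrically $\|T_s x\| = \|A^{-1}(AT_sA^{-1})(Ax)\| \geqsl \|(AT_sA^{-1})Ax\|/\|A\| = \|Ax\|/\|A\| \geqsl \|x\|/(\|A\|\,\|A^{-1}\|)$, so \eqref{eqn:aux10} holds with $m = 1/(\|A\|\,\|A^{-1}\|)$ and $M = \|A\|\,\|A^{-1}\|$.

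For the converse, assume \eqref{eqn:aux10}. I would apply \PRO{gen} with $A_0 \df I$ and the \emph{constant} functions $m(x) \equiv m$, $M(x) \equiv M$; hypothesis \eqref{eqn:aux5} is then exactly \eqref{eqn:aux10}. \PRO{gen} yields a positive operator $A \in \llL(H)$ with trivial kernel, a strong-operator-continuous homomorphism $s \mapsto V_s$ into isometries, and $A T_s = V_s A$ for all $s$, together with $m\|x\| \leqsl \|Ax\| \leqsl M\|x\|$ for all $x \in H$. The lower bound $m\|x\| \leqsl \|Ax\|$ shows $A$ is bounded below, hence has closed range; combined with trivial kernel and the fact that a positive operator has dense range exactly when it has trivial kernel, $A$ is a bijection, so $A \in \llL(H)$ is invertible. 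Then $A T_s A^{-1} = V_s$ is an isometry for each $s$.

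It remains to arrange $m I \leqsl A \leqsl M I$. Since $A$ is positive, $\|Ax\|^2 = \scalar{Ax}{Ax} = \scalar{A^2 x}{x}$, but it is cleaner to note that $\scalar{Ax}{x} \leqsl \|Ax\|\,\|x\| \leqsl M\|x\|^2$ is not quite what we want directly; instead I would observe that in \PRO{gen} the operator $A$ is the positive square root of the operator $B \in \Delta$ satisfying $m(x)^2\|x\|^2 \leqsl \scalar{Bx}{x} \leqsl M(x)^2\|x\|^2$, which with constant $m,M$ reads $m^2\|x\|^2 \leqsl \scalar{Bx}{x} \leqsl M^2\|x\|^2$, i.e.\ $m^2 I \leqsl B \leqsl M^2 I$; applying the square root (operator monotone on positive operators) gives $m I \leqsl A \leqsl M I$. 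Strictly speaking this uses the internal construction in the proof of \PRO{gen} rather than only its statement, so in the write-up I would either invoke that construction explicitly or, alternatively, replace $A$ by the positive square root of $A^{*}A = A^{2}$ and use that $m^2\|x\|^2 \leqsl \|Ax\|^2 = \scalar{A^2x}{x} \leqsl M^2\|x\|^2$ forces $m^2 I \leqsl A^2 \leqsl M^2 I$ and hence $mI \leqsl A \leqsl MI$ by operator monotonicity of $t \mapsto \sqrt{t}$. The only mild subtlety — and the step I would be most careful about — is this last normalization of the spectrum of $A$; everything else is a routine translation of \PRO{gen} to the case of constant bounds.
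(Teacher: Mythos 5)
Your argument is correct and is precisely the derivation the paper intends (the paper leaves the proof to the reader, describing \COR{iso} as an immediate consequence of \PRO{gen}): both directions are handled as expected, and your normalization $mI \leqsl A \leqsl MI$ via $m^2 I \leqsl A^2 \leqsl M^2 I$ and operator monotonicity of the square root is sound and does not even require peeking at the internal construction of \PRO{gen}.
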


The proof is left to the reader.

\begin{rem}{SzN}
\COR{iso} combined with \COR{enlarge} leads to a stronger generalisation of
the Sz.-Nagy theorem that reads as follows:
\begin{quote}\itshape
If \(\ssS \ni s \mapsto T_s \in \llL(H)\) is a homomorphism of an SGT \(\ssS\)
into \(\llL(H)\) \textup{(}where \(H\) is a Hilbert space\textup{)} that is
continuous in the strong operator topology and there are a weakly right amenable
subsemigroup \(\ssS_0\) of \(\ssS\) and two positive real constants \(m\) and
\(M\) such that condition \((\star)\) of \COR{enlarge} is fulfilled and
the inequality \(m \|x\| \leqsl \|T_s x\| \leqsl M \|x\|\) holds for any \(s \in
\pmb{\ssS}_{\pmb0}\) and \(x \in H\), then there exists an invertible positive
operator \(A \in \llL(H)\) such that \(A T_s A^{-1}\) is an isometry for each
\(s \in \ssS\).
\end{quote}
In particular, if \(\ggG\) is a topological group and a weakly right amenable
semigroup \(\ssS \subset \ggG\) generates \(\ggG\), then each unital
homomorphism \(\Phi\dd \ggG \ni g \mapsto T_g \in \llL(H)\) that is continuous
on \(\ssS\) in the strong operator topology and satisfies condition
\eqref{eqn:aux10} of \COR{iso} has image \(\Phi(\ggG)\) ``similar'' to
a subgroup of the unitary group of \(H\).
\end{rem}

\begin{proof}%
[Proofs of {\THM[s]{m1}}, \ref{thm:m2}, \ref{thm:m3} and \ref{thm:m4}]
As all discrete abelian semigroups are strongly right amenable (see item (B) in
\EXS{amen}), the theorems of the first section are special cases of the results
of this part.
\end{proof}

\section{More on amenability}

\begin{proof}[Proof of \THM{fpp}]
Let \(\FfF \df C_{norm}(\ssS)\) [resp.\ \(\FfF \df C_{weak}(\ssS)\)].\par
First assume (SG2) (or (SG2')) holds. Let the set \(\Delta\) of all means on
\(\FfF\) be equipped with the weak* topology. For \(\phi \in \Delta\) and \(s
\in \ssS\) we define \(\phi.s \in \Delta\) by \((\phi.s)(f) = \phi(f_s)\). Then
\((\phi,s) \mapsto \phi.s\) is a (unital if \(\ssS\) is unital) affine right
action that is jointly [resp.\ separately] continuous. So, an application of
(SG2) (or (SG2')) yields the existence of a right invariant mean.\par
Now assume \(\ssS\) is right amenable and let \(K\) and \(K \times \ssS \ni
(x,s) \mapsto x.s \in K\) be, respectively, a non-empty compact convex set in
a Hausdorff locally convex topological vector space and a jointly [resp.\
separately] continuous affine right action. Denote by \(CA(K) \subset C(K)\) and
\(j\), respectively, the Banach space of all real-valued continuous affine
functions on \(K\) and the function from \(CA(K)\) constantly equal to \(1\).
Fix a right invariant mean \(\phi\) on \(\FfF\) and a point \(b \in K\). For
each \(u \in CA(K)\) let \(\hat{u} \in \FfF\) be given by \(\hat{u}(s) =
u(b.s)\). Observe that \(\Lambda\dd CA(K) \ni u \mapsto \phi(\hat{u}) \in \RRR\)
is a linear functional such that \(\Lambda(j) = 1 = \|\Lambda\|\). It follows
from a well-known characterisation of such functionals on \(CA(K)\) that there
is \(a \in K\) for which \(\Lambda(u) = u(a)\) for any \(u \in CA(K)\). In order
to check that \(a\) is a fixed point for the action, it suffices to verify that
\(u(a.s) = u(a)\) for any \(u \in CA(K)\). To this end, note that \(\hat{u}_s =
\widehat{u_s}\) for any such a function \(u\) and all \(s \in \ssS\). But then
\(u(a.s) = u_s(a) = \Lambda(u_s) = \phi(\widehat{u_s}) = \phi(\hat{u}_s) =
\phi(\hat{u}) = \Lambda(u) = u(a)\) and we are done.
\end{proof}

To formulate our next result, let us introduce some elementary notion. For any
(unital or non-unital) SGT \(\ssS\) we use \(\unit{\ssS}\) to denote its
\textit{unitization}; that is, \(\unit{\ssS}\) is a unital SGT such that
\(\unit{\ssS} = \ssS \sqcup \{1\}\) where \(1 \notin \ssS\) is the neutral
element of \(\unit{\ssS}\) as well as its isolated point, the multiplication
of \(\unit{\ssS}\) extends that in \(\ssS\), \(\ssS\) is both closed and open in
\(\unit{\ssS}\) and the topology of \(\ssS\) coincides with the one inherited
from \(\unit{\ssS}\). Note that even if \(\ssS\) is unital, the neutral element
of \(\unit{\ssS}\) differs from the one of \(\ssS\).\par
Below we list basic properties of the class of right amenable SGT's.

\begin{pro}{pro}
\begin{enumerate}[\upshape(A)]
\item An SGT \(\ssS\) is weakly \textup{[}resp.\ strongly\textup{]} right
 amenable iff so is \(\unit{\ssS}\).
\item Let \(h\dd \ttT \to \ssS\) be a continuous onto homomorphism between
 SGT's. If \(\ttT\) is weakly \textup{[}resp.\ strongly\textup{]} right
 amenable, so is \(\ssS\).
\item Let \(\ttT\) be a subsemigroup of an SGT \(\ssS\) such that \(\ssS\) is
 a unique subsemigroup \(\zzZ\) of \(\ssS\) that is closed, contains \(\ttT\)
 and has the following property:
 \begin{itemize}
 \item[\((\star\star)\)] if \(x, y \in \ssS\) and \(x, xy \in \zzZ\), then \(y
  \in \zzZ\).
 \end{itemize}
 Then, if \(\ttT\) is weakly \textup{[}resp.\ strongly\textup{]} right amenable,
 so is \(\ssS\).
\item Let \(\{\ssS_b\}_{b \in B}\) be a family of subsemigroups of an SGT
 \(\ssS\) such that \(\ssS\) coincides with the smallest closed subsemigroup
 that contains \(\bigcup_{b \in B} \ssS_b\). Assume \(s s' = s' s\) for any \(s
 \in \ssS_b\) and \(s' \in \ssS_{b'}\) \textup{(}\(b, b' \in B\)\textup{)} with
 \(b \neq b'\). If all \(\ssS_b\) are weakly \textup{[}resp.\ strongly\textup{]}
 right amenable, so is \(\ssS\).
\item The \textup{(}topological\textup{)} product of a family of weakly
 \textup{[}resp.\ strongly\textup{]} right amenable \textbf{unital} SGT's is
 weakly \textup{[}resp.\ strongly\textup{]} right amenable as well.
\end{enumerate}
\end{pro}

We do not know whether the assumption in (E) that SGT's are unital can be
dropped (even when the family of SGT's is finite).

\begin{proof}[Proof of \PRO{pro}]
Item (A) is left to the reader as a simple exercise. To prove each of (B), (C)
and (D), fix a jointly [resp.\ separately] continuous affine right action
\begin{equation}\label{eqn:aux17}
K \times \ssS \ni (x,s) \mapsto x.s \in K
\end{equation}
of \(\ssS\) on a non-empty compact convex set \(K\). Under the settings of (B),
also \(K \times \ttT \ni (x,t) \mapsto x.h(t) \in K\) is such an action. So,
\THM{fpp} implies that this last action has a fixed point. It follows from
the surjectivity of \(h\) that also \eqref{eqn:aux17} has a fixed point, and we
are done. Under the settings of (C), there is \(a \in K\) such that \(a.t = a\)
for any \(t \in \ttT\). Since \(\zzZ \df \{s \in \ssS\dd\ a.s = a\}\) is
a closed subsemigroup of \(\ssS\) that contains \(\ttT\) and satisfies
\((\star\star)\), the assumptions of (C) imply that \(\ssS = \zzZ\), which means
that, again, \eqref{eqn:aux17} has a fixed point.\par
We turn to (D). For any \(b \in B\) set \(F_b \df \{x \in K\dd\ x.s = x
\textup{ for any } s \in \ssS_b\}\). It follows that \(F_b\) is compact, convex,
and non-empty---by a suitable amenability of \(\ssS_b\). We claim that \(L \df
\bigcap_{b \in B} F_b\) is non-empty as well. To be convinced of that, it is
enough to check that \(\bigcap_{k=1}^n F_{b_k} \neq \varempty\) for any \(b_1,
\ldots,b_n \in B\) (and \(n > 0\)). We proceed by induction on \(n\). The case
\(n = 1\) has already been established. Now if \(n > 1\) and \(b_1,\ldots,b_n
\in B\) are arbitrary, then \(D \df \bigcap_{k=1}^{n-1} F_{b_k}\) is compact,
convex, and non-empty (by the induction hypthesis). Moreover, if \(s \in
\ssS_{b_n}\) and \(x \in D\), then \(x.s \in D\). (Indeed, if \(k < n\) and \(t
\in \ssS_{b_k}\), then \((x.s).t = x.(st) = x.(ts) = (x.t).s = x.s\).) So, \(D
\times \ssS_{b_n} \ni (x,s) \mapsto x.s \in D\) is a correctly defined action
that has a fixed point, by \THM{fpp}. Equivalently, \(D \cap F_{b_n} \neq
\varempty\), which finishes the proof that \(L\) is non-empty. Now let \(a \in
L\). The set \(\ttT \df \{s \in \ssS\dd\ a.s = a\}\) is a closed subsemigroup of
\(\ssS\) that contains \(\bigcup_{b \in B} \ssS_b\) and hence \(\ttT = \ssS\),
thanks to the assumptions of (D).\par
Finally, we turn to (E). So, assume \(\ssS\) is the product of suitably amenable
unital SGT's \(\ttT_b\) (\(b \in B\)) and for all \(b \in B\) denote by \(e_b\)
the neutral element of \(\ttT_b\). Further, for any \(d \in B\) and \(t \in
\ttT_d\) denote by \(t^{(d)}\) the element \((z_b)_{b \in B} \in \ssS\) such
that \(z_b = e_b\) for \(b \neq d\) and \(z_d = t\), and set \(\ssS_d \df
\{x^{(d)}\dd\ x \in \ttT_d\}\). Since \(\ttT_d \ni x \mapsto x^{(d)} \in
\ssS_d\) is a continuous surjective homomorphism, it follows from item (B) that
\(\ssS_d\) is suitably amenable. Now it suffices to observe that for
\(\{\ssS_b\}_{b \in B}\) and \(\ssS\) all assumptions of item (D) are satisfied
and apply this part.
\end{proof}

The next property (in the class of topological groups), with which we end
the paper, is less elementary.

\begin{thm}{dense}
If an SGT \(\ssS\) contains a dense subsemigroup that is weakly \textup{[}resp.\
strongly\textup{]} right amenable, then \(\ssS\) itself is weakly
\textup{[}resp.\ strongly\textup{]} right amenable.\par
A dense subgroup of a weakly right amenable \textbf{topological group} is weakly
right amenable as well.
\end{thm}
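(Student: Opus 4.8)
The plan is to treat the two assertions separately: the first is a quick transfer of fixed points, while the second requires a genuine extension theorem for uniformly continuous functions and is the substantive part.

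For the first assertion, let $\ssS_0$ be the dense weakly [resp.\ strongly] right amenable subsemigroup of $\ssS$. By \THM{fpp} it suffices to show that every jointly [resp.\ separately] continuous affine right action $K\times\ssS\ni(x,s)\mapsto x.s\in K$ of $\ssS$ on a non-empty compact convex set $K$ has a fixed point. Restricting such an action to $\ssS_0$ (with the inherited topology) yields an action of the same continuity type, so \THM{fpp} applied to $\ssS_0$ produces $a\in K$ with $a.s=a$ for all $s\in\ssS_0$. For arbitrary $s\in\ssS$ I would pick a net $(s_\sigma)\subset\ssS_0$ with $s_\sigma\to s$; since the orbit map $\ssS\ni s\mapsto a.s\in K$ is continuous in \emph{both} the jointly continuous and the separately continuous case, $a.s=\lim_\sigma a.s_\sigma=a$. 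Thus $a$ is fixed for all of $\ssS$, and \THM{fpp} gives the amenability of $\ssS$.

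For the second assertion, $\ggG$ is a weakly right amenable topological group and $H\leqsl\ggG$ a dense subgroup with the inherited topology and its own group uniformity. I would argue through invariant means (see \DEF{amen}) rather than fixed points, the obstruction to a direct fixed-point transfer being that a jointly continuous affine action of $H$ need not extend continuously over $\ggG$. The key structural step is to identify $C_{norm}$ explicitly for a topological group: here right translations are homeomorphisms, so $C_r(\ggG)=C_b(\ggG)$, and a short computation (using $\|f_s-f_e\|_\infty=\sup_t|f(ts)-f(t)|$ together with $f_{st}=(f_t)_s$) shows that $C_{norm}(\ggG)$ is exactly the space of bounded functions that are uniformly continuous with respect to one fixed, say the left, group uniformity; the second defining clause of $C_{norm}$ is automatic because a right translate of a left-uniformly continuous function is again such (conjugate the controlling neighbourhood of the identity). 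The same identification holds verbatim for $H$, and the left uniformity of $H$ coincides with the one induced from $\ggG$, since $H$ is a subgroup.

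With this identification I would invoke the classical extension theorem: a uniformly continuous real-valued function on a dense subset of a uniform space extends uniquely to a uniformly continuous function on the closure (here $\overline H=\ggG$), completeness of $\RRR$ being what makes the construction work. This gives a linear map $E\dd C_{norm}(H)\to C_{norm}(\ggG)$, $f\mapsto\tilde f$, that is isometric (as $\tilde f\restriction H=f$ and $H$ is dense, whence $\inf_{\ggG}\tilde f=\inf_H f$ and $\sup_{\ggG}\tilde f=\sup_H f$) and two-sided inverse to the restriction $R\dd C_{norm}(\ggG)\to C_{norm}(H)$. I would then check compatibility with right translations by $h\in H$: the function $(\tilde f)_h$ lies in $C_{norm}(\ggG)$ and restricts on $H$ to $f_h$, so uniqueness of the extension forces $E(f_h)=(\tilde f)_h$. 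Fixing a right-invariant mean $\phi$ on $C_{norm}(\ggG)$, the functional $\psi\df\phi\circ E$ is linear, satisfies $\inf_H f\leqsl\psi(f)\leqsl\sup_H f$ (hence is a mean), and is right invariant because $\psi(f_h)=\phi((\tilde f)_h)=\phi(\tilde f)=\psi(f)$. Therefore $H$ is weakly right amenable.

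The main obstacle is the identification of $C_{norm}$ with the bounded functions uniformly continuous for the correct group uniformity, together with the verification that the extension map $E$ intertwines the right-translation actions, so that invariance of the mean survives transport; once these are in place the uniform-continuity extension theorem does the real work and the remainder is routine. I expect this is also exactly why the statement is confined to the weak case and to groups: for $C_{weak}$ there is no uniform-continuity characterization to drive the extension, and without inverses the right-translation bookkeeping that yields $E(f_h)=(\tilde f)_h$ breaks down.
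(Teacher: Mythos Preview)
Your argument for the first assertion is correct and is essentially an unwinding of the paper's own proof, which simply cites item~(C) of \PRO{pro} (that item is proved exactly by taking a fixed point for the dense subsemigroup and observing that the stabiliser \(\{s\in\ssS\dd a.s=a\}\) is closed).

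For the second assertion your proof is correct but follows a genuinely different route from the paper. The paper argues on the \emph{action} side: given a jointly continuous affine right action of the dense subgroup \(H_0\) on a compact convex \(K\), it packages this as a continuous homomorphism \(\Phi_0\dd H_0\to\mathrm{Homeo}(K)\) and then invokes two black boxes---(RC1) that continuous homomorphisms from a dense subgroup into a Ra\u{\i}kov-complete group extend, and (RC2) that \(\mathrm{Homeo}(K)\) with the compact-open topology is Ra\u{\i}kov-complete---to extend the action to the ambient group \(H\); weak right amenability of \(H\) then supplies the fixed point. You argue on the \emph{mean} side instead: you identify \(C_{norm}\) of a topological group with the bounded functions uniformly continuous for a group uniformity, use the classical extension theorem for uniformly continuous maps into \(\RRR\) to build an isometric, translation-equivariant isomorphism \(E\dd C_{norm}(H)\to C_{norm}(\ggG)\), and pull the invariant mean on \(C_{norm}(\ggG)\) back through \(E\). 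Your approach is more elementary (no Ra\u{\i}kov completeness), and in fact makes transparent \emph{why} the result is confined to the weak case: the identification of \(C_{norm}\) with a uniform-continuity class is what drives the extension, and there is no analogous description of \(C_{weak}\). The paper's approach, in turn, has the conceptual advantage of showing directly that every action of the subgroup actually comes from an action of the whole group, a statement of independent interest.
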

\begin{proof}
The first claim of the theorem is a special case of item (C) of \PRO{pro}.\par
To prove the second claim, we will make use of so-called Ra\u{\i}kov-complete
topological groups (see, e.g., Section~3.6 in \cite{a-t}). We need the following
two facts about them:
\begin{enumerate}[(RC1)]
\item If \(u\dd H_0 \to G\) is a continuous homomorphism of a dense subgroup
 \(H_0\) of a topological group \(H\) into a Ra\u{\i}kov-complete topological
 group \(G\), then \(u\) (uniquely) extends to a continuous homomorphism from
 \(H\) into \(G\).
\item The homeomorphism group (equipped with the compact-open topology) of
 a compact Hausdorff space is Ra\u{\i}kov-complete.
\end{enumerate}
Now assume \(H_0\) is a dense subgroup of a weakly right amenable topological
group \(H\). Let \(K \times H_0 \ni (x,h) \mapsto x.h \in K\) be a (jointly)
continuous affine right action of \(H_0\) on a non-empty compact convex set
\(K\). For any \(h \in H_0\) denote by \(f_h\dd K \to K\) the homeomorphism
\(x \mapsto x.h^{-1}\). It is clear that \(\Phi_0\dd h \mapsto f_h\) is
a homomorphism of \(H_0\) into the homeomorphism group \(G\) of \(K\). What is
more, the joint continuity of the action is equivalent to the continuity of this
homomorphism. So, (RC1) combined with (RC2) implies that \(\Phi_0\) extends to
a continuous homomorphism \(\Phi\dd H \to G\). We conclude that the given action
(of \(H_0\) on \(K\)) extends to a jointly continuous right action \((x,h)
\mapsto \Phi(h^{-1})(x)\) of \(H\) on \(K\). It is easy to verify that this
action is affine and hence it has a fixed point, which finishes the proof.
\end{proof}

\end{document}